\documentclass[a4paper,11pt,reqno]{amsart}
\usepackage{amssymb}
\usepackage{amsmath,enumerate}
\usepackage{graphicx}
\usepackage{tikz}

\usepackage[top=30truemm,bottom=30truemm,left=20truemm,right=20truemm]{geometry}
\allowdisplaybreaks[4]

\newtheorem{Def}{Definition}[section]
\theoremstyle{remark}
\newtheorem{Rem}[Def]{Remark}

\newtheorem{Ass}[Def]{Assumption}
\newtheorem*{Ack}{Acknowledgments}
\theoremstyle{plain}
\newtheorem{Th}[Def]{Theorem}
\newtheorem{Prop}[Def]{Proposition}
\newtheorem{Lem}[Def]{Lemma}

\newcommand{\Z}{\mathbb{Z}}

\newcommand{\C}{\mathbb{C}}
\renewcommand{\P}{\mathbb{P}}
\renewcommand{\H}{\mathbb{H}}

\newcommand{\CH}{\mathcal{H}}
\newcommand{\CL}{\mathcal{L}}

\newcommand{\ga}{\gamma }
\newcommand{\de}{\delta }
\newcommand{\Ga}{\Gamma }
\newcommand{\De}{\Delta }
\newcommand{\vth}{\vartheta }

\newcommand{\vph}{\varphi }
\newcommand{\la}{\lambda }
\newcommand{\La}{\Lambda }
\newcommand{\om}{\omega }
\newcommand{\Om}{\Omega }

\newcommand{\na}{\nabla }
\newcommand{\pa}{\partial }
\newcommand{\ot}{\otimes }

\newcommand{\bu}{\bullet}
\newcommand{\we}{\wedge}

\newcommand{\DS}{\displaystyle }

\newcommand{\tpi}{2\pi \sqrt{-1}}
\newcommand{\pii}{\pi \sqrt{-1}}
\newcommand{\frs}{\mathfrak{s}}

\newcommand{\ch}{{\rm ch}}

\DeclareMathOperator{\Res}{Res}

\makeatletter
\@addtoreset{equation}{section}

\makeatother

\title[Riemann-Wirtinger integrals on the product of two tori]{
Riemann-Wirtinger integrals on the product of two one-dimensional complex tori  
}
\author[Y. Goto]{Yoshiaki Goto}
\address[Goto]{
  Otaru University of Commerce, 
  3-5-21, Midori, Otaru, Hokkaido, 047-8501, Japan
}
\email{goto@res.otaru-uc.ac.jp}

\keywords{
Riemann-Wirtinger integral; 
Theta function; 
Twisted cohomology groups; 
Elliptic arrangements.
}
\subjclass[2020]{
33C99,  
14K25, 
55N25. 
}
\date{\today}

\begin{document}
\begin{abstract}
  The Riemann-Wirtinger integral is an analogue of the hypergeometric integral 
  defined on a one-dimensional complex torus. 
  As a generalization, we define the Riemann-Wirtinger integral on 
  the product of two one-dimensional complex tori.
  We study the structure of the twisted cohomology group associated with the Riemann-Wirtinger integral 
  and derive a system of differential equations satisfied by this integral. 
\end{abstract}

\maketitle

\section{Introduction}
The Gauss hypergeometric function ${}_2 F_1 (a,b,c;z)$ admits an integral representation
\begin{align*}
  {}_2 F_1 (a,b,c;z)=\frac{\Ga(c)}{\Ga(a) \Ga(c-a)} \int_0^1 u^a (1-u)^{c-a} (1-zu)^{-b} \frac{du}{u(1-u)} .
\end{align*}
This integral can be interpreted as a pairing of a twisted homology class 
and a twisted cohomology class on $\P^1_u -\{ 0,1,1/z,\infty \}$. 
Aomoto (e.g., \cite{AK}) generalized this framework to 
twisted (co)homology theory on $\P^n$ minus divisors, which enables the systematic study of 
many types of hypergeometric functions. 

These theories provide generalizations from $\P^1$ to $\P^n$. 
On the other hand, one may consider another type of generalization, namely from $\P^1$ to a Riemann surface 
with positive genus. 
The Riemann-Wirtinger integral gives such a generalization on a one-dimensional complex torus. 
The Riemann-Wirtinger integral in \cite{Mano}, \cite{Mano-Watanabe} is defined by 
\begin{align*}
  \int_{\ga} e^{\tpi c_0 u}\vth_1(u-t_1)^{c_1} \cdots \vth_1(u-t_n)^{c_n} \frs(u-t_j;\la)du ,
\end{align*}
where $\ga$ is a twisted cycle, $c_0\in \C$, $c_1,\dots ,c_n,\la \in \C-\Z$, 
and the theta function $\vth_1$ and $\frs(u;\la)$ are defined in \S \ref{sec:preliminaries}.
The integrand can be regarded as a multivalued function on 
$M$, a complex torus with $n$ points removed. 
It defines a local system $\CL_{\la}$ on $M$. 
Thus, the Riemann-Wirtinger integral can be studied in terms of 
twisted (co)homology groups. 
In \cite{Mano-Watanabe}, 
the structures of the twisted (co)homology groups are investigated in detail, 
and in \cite{G-RWintegral-intersection}, 
the intersection theory on these groups are studied. 

In this paper, 
we consider a generalization of the Riemann-Wirtinger integral to the two-dimensional case. 
As a first example, we consider a local system on the product of two one-dimensional complex tori. 
In particular, we choose a simple example of such local systems, 
and analyze it in detail. 
Using the multivalued function $T(u_1,u_2)$ defined in (\ref{eq:T-def}), 
we define a generalization of the Riemann-Wirtinger integral. 
We study the structure of the twisted cohomology group associated with this integral. 
First, we construct a basis of the twisted cohomology group using differential $2$-forms 
introduced in \cite{LV} (Section \ref{sec:cohomology}). 
Next, we derive a system of differential equations satisfied by the integrals 
(Section \ref{sec:diff-eq}). 
Although we do not study properties of this system or the structure of the twisted homology groups, 
we leave these problems for future work. 


\section{Preliminaries}\label{sec:preliminaries}
In this section, we introduce several functions that we will use throughout this paper. 

We define the theta function 
\begin{align*}
  \vth_1 (u)=\vth_1 (u,\tau)
  =-\sum_{m\in \Z} \exp\left(\pi \sqrt{-1} \Big( m+\frac{1}{2} \Big)^2 \tau
  +\tpi \Big( m+\frac{1}{2} \Big)\Big( u+\frac{1}{2} \Big)\right), 
\end{align*}
where $u\in \C$ and $\tau \in \H$. 
Throughout this paper, we fix $\tau \in \H$ and use the notation $\vth_1 (u)$. 
It is well known that $\vth_1 (u)$ is an odd function, has a simple zero at $u=0$, and 
satisfies the quasi-periodicity 
\begin{align*}
  \vth_1(u+1)=-\vth_1(u) =e^{\pii}\vth_1(u),\quad 
  \vth_1(u+\tau)=-e^{-\pii (\tau+2u)}\vth_1(u)=e^{-\pii (\tau+2u+1)}\vth_1(u).
\end{align*}
We also introduce the following two functions: 
\begin{align*}
  \rho (u)=\frac{\vth_1'(u)}{\vth_1 (u)} ,\qquad 
  \frs (u;\la) =\frac{\vth_1 (u-\la) \vth_1'(0)}{\vth_1(u)\vth_1(-\la)} ,
\end{align*}
where $\vth_1'(u)=\frac{d}{du}\vth_1(u)$ and $\la \in \C-\Z$. 
Note that $\rho (u)$ is an odd function, and
$\frs (u;\la)$ satisfies the quasi-periodicity
\begin{align*}
  \frs (u+1;\la) =\frs (u;\la) ,\quad 
  \frs (u+\tau;\la) =e^{\tpi \la} \frs (u;\la) .
\end{align*}

\section{Twisted cohomology group}\label{sec:cohomology}
\subsection{Settings}
For a fixed $\tau \in \H$, we set $\La_{\tau}=\Z +\Z \tau$ and $E=\C/\La_{\tau}$. 
We consider the product $E^2=E\times E$. 
For $\la=(\la_1,\la_2) \in \C^2$, we define a one-dimensional representation 
$e_{\la} :\pi_1 (E^2) \simeq \La_{\tau}^2 \to \C^{*}$ of 
the fundamental group $\pi_1 (E^2)$
by  
\begin{align*}
  e_{\la}(l_1+m_1\tau,l_2+m_2\tau) =e^{\tpi (m_1\la_1+m_2\la_2)} \qquad 
  (l_1,m_1,l_2,m_2 \in \Z). 
\end{align*}
Let $R_{\la}$ be the local system on $E^2$ determined by this representation $e_{\la}$.  

Let $n_1,n_2 \geq 1$ and assume that 
$t_{11},\dots ,t_{1n_1},t_{21},\dots ,t_{2n_2} \in \C$ represent distinct points of $E$. 
We also assume that $t_{1i}+t_{2j} \not\in \La_{\tau}$ for all $i,j$. 
We consider the following hyperplanes in $\C^2$: 
\begin{align*}
  \tilde{H}_{\pm} = (u_1 \pm u_2 =0) ,\quad 
  \tilde{H}_{kj} =(u_k =t_{kj}) \ (k\in \{ 1,2\} ,\ j\in \{ 1,\dots ,n_k \}).  
\end{align*}
These hyperplanes define \textit{elliptic hyperplanes}\footnote{
  The name is introduced in \cite{LV}. 
} $H_{\pm}$, $H_{kj}$ in $E^2$. 
For example, $H_{11}$ is defined as 
$\{ ([u_1],[u_2]) \in E^2 \mid [u_1]=[t_{11}] \}$, 
where $[t]$ denotes the point of $E$ represented by $t \in \C$. 
We set $\CH =\{ H_{\pm} \} \cup \{ H_{kj} \}_{k,j}$ and 
$M=E^2 -\bigcup_{H\in \CH} H$. 

Levin and Varchenko \cite{LV} studied the cohomology group $H^l (X;R_{\la})$, 
where $X$ is the complement of elliptic hyperplanes in $E^k$, 
and $R_{\la}$ is the local system defined similarly to the above. 
Our situation is a special case of their framework. 
In this paper, we consider a multivalued function on $M$ and 
the local system defined by this function. 

We define the multivalued function 
\begin{align}
  \nonumber
  T(u_1,u_2)=
  &e^{\tpi (c_{10}u_1 +c_{20}u_2)} \cdot \vth_1(u_1-u_2)^{c} \cdot \vth_1(u_1+u_2)^{c} \\
  \label{eq:T-def}
  &\cdot \vth_1(u_1-t_{11})^{c_{11}} \cdots \vth_1(u_1-t_{1n_1})^{c_{1n_1}} 
  \cdot \vth_1(u_2-t_{21})^{c_{21}} \cdots \vth_1(u_2-t_{2n_2})^{c_{2n_2}} 
\end{align}
on $M$, where 
$c_{10},c_{20}\in \C$, and $c,c_{11},\dots ,c_{1n_1},c_{21},\dots ,c_{2n_2}\in \C-\Z$. 
We set 
\begin{align*}
  c_{k\infty} =-\la_k -c_{k0}\tau -\sum_{j=1}^{n_k} c_{kj} t_{kj} \quad 
  (k \in \{ 1,2 \}).
\end{align*}
Throughout this paper, we impose the following conditions. 
\begin{Ass}
  \begin{align}
    \label{eq:ass-lambda}
    &\la_1,\ \la_2 ,\ \la_1 +\la_2 ,\ \la_1-\la_2 \not\in \La_{\tau} ;\\
    \label{eq:ass-sum-c}
    &2c+c_{11}+\cdots +c_{1n_1}=2c+c_{21}+\cdots +c_{2n_2}=0 ; \\
    \label{eq:ass-c_inf}
    &\text{$c_{1\infty}$ and $c_{2\infty}$ are constant.}
  \end{align}
\end{Ass}
By the condition (\ref{eq:ass-lambda}), $\la$ is convenient for $\CH$ 
in the sense of \cite{LV}.
By the conditions (\ref{eq:ass-sum-c}) and (\ref{eq:ass-c_inf}), we have 
\begin{align}
  \nonumber
  &T(u_1+1,u_2)=e^{\tpi c_{10}}T(u_1,u_2),&  
  &T(u_1+\tau,u_2)=e^{-\tpi (c_{1\infty}+\la_1)}T(u_1,u_2),& \\ 
  \label{eq:T-tau-shift}
  &T(u_1,u_2+1)=e^{\tpi (c_{20} -c)}T(u_1,u_2), &
  &T(u_1,u_2+\tau)=e^{-\tpi (c_{2\infty}+\la_2 -c)}T(u_1,u_2),&  
\end{align}
and thus $T(u)$ defines local systems $\CL=\C T(u)^{-1}$ and $\CL^{\vee}=\C T(u)$ on $M$. 

\begin{Rem}
  We regard our $T$ as the simplest example of a two-dimensional version of the Riemann-Wirtinger integral.  
  Although it seems that the multivalued function 
  \begin{align*}
    &e^{\tpi (c_{10}u_1 +c_{20}u_2)} \cdot \vth_1(u_1-u_2)^{c}  \\
    &\cdot \vth_1(u_1-t_{11})^{c_{11}} \cdots \vth_1(u_1-t_{1n_1})^{c_{1n_1}} 
      \cdot \vth_1(u_2-t_{21})^{c_{21}} \cdots \vth_1(u_2-t_{2n_2})^{c_{2n_2}} 
  \end{align*}
  gives a simpler example, 
  this function does not define a local system on $M$, 
  since the exponential factors in (\ref{eq:T-tau-shift}) depend on $u_1$ and $u_2$. 
\end{Rem}

\subsection{Twisted cohomology group}
We set $\CL_{\la}=\CL \ot_{\C} R_{\la}$, and 
we consider the twisted cohomology group $H^i(M;\CL_{\la})$. 
We set $\Om^k_{\la}=\Om^k_{M} \ot_{\C} R_{\la}$ and 
we define a connection $\na :\Om^k_{\la} \to \Om^{k+1}_{\la}$ by $\na \vph =d\vph +\om \we \vph$, 
where $\om =d\log T(u)\in \Om^1_M(M)$. 
By \cite[Corollaire II.6.3]{Deligne}, we have 
$H^i(M;\CL_{\la})\simeq H^i (M,(\Om^{\bu}_{\la}(M),\na))$. 
By \cite{Cho}, this cohomology vanishes except in degree two. 
In what follows, we thus investigate the structure of 
$H^2(M;\CL_{\la})\simeq \Om^{2}_{\la}(M)/\na (\Om^{1}_{\la}(M))$, 
and we identify the two sides via this isomorphism. 
It is well known that the dimension of $H^2(M;\CL_{\la})$ coincides with 
the Euler characteristic $\chi(M)$ of $M$. 
We evaluate $\chi (M)$. 

\begin{Prop}\label{prop:dim-H^2}
  \begin{align*}
    \dim H^2(M;\CL_{\la}) =\chi(M)=n_1 n_2+2n_1+2n_2+4 =(n_1+2)(n_2+2).
  \end{align*}
\end{Prop}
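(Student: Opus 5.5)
The plan is to use the facts quoted just before the statement: $H^i(M;\CL_{\la})=0$ for $i\neq 2$ by \cite{Cho}, and the Euler characteristic of a rank-one local system equals that of the space. Together these give $\dim H^2(M;\CL_{\la})=\chi(M)$. So everything reduces to computing $\chi(M)$, which I will do purely topologically by additivity of the Euler characteristic over a stratification. Write $D=\bigcup_{H\in\CH}H$. Since $\chi(E^2)=0$, we get $\chi(M)=\chi(E^2)-\chi(D)=-\chi(D)$.

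To compute $\chi(D)$, let $P$ be the finite set of points lying on at least two components of $D$. For each $H\in\CH$, let $H^{\circ}=H\setminus P$. Each $H$ is an elliptic curve, since it is the image of a line through a lattice-compatible direction (for $H_{\pm}$, the map $u\mapsto([u],[\mp u])$ embeds $E$). Hence $\chi(H^{\circ})=-\#(H\cap P)$. Stratifying $D$ as $P\sqcup\bigsqcup_H H^{\circ}$ gives
\begin{align*}
  \chi(M)=-\chi(D)=\sum_{H\in\CH}\#(H\cap P)-\#P=\sum_{p\in P}\bigl(m(p)-1\bigr),
\end{align*}
where $m(p)$ is the number of components of $D$ through $p$. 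Transversality plays no role, since only the set-theoretic intersection matters.

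The main step is the enumeration of $P$ using the genericity assumptions.
\begin{itemize}
\item $H_+\cap H_-$: here $u_1=u_2$ and $2u_1\in\La_\tau$, which gives the $4$ points $([\varepsilon],[\varepsilon])$ with $\varepsilon$ a $2$-torsion point.
\item $H_{1i}\cap H_{2j}$: this is the single point $([t_{1i}],[t_{2j}])$, giving $n_1n_2$ points. None of them lies on $H_-$ (the $t$'s represent distinct points) or on $H_+$ (by $t_{1i}+t_{2j}\notin\La_\tau$).
\item $H_{1i}\cap H_{\pm}$: the points $([t_{1i}],[\mp t_{1i}])$, giving $2n_1$ points. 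Symmetrically, $H_{2j}\cap H_{\pm}$ gives $2n_2$ points.
\item $H_{1i}\cap H_{1i'}=\emptyset$ for $i\neq i'$, and likewise for the $H_{2j}$.
\end{itemize}

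If all these points are distinct, each has $m(p)=2$. Then $\chi(M)=4+n_1n_2+2n_1+2n_2=(n_1+2)(n_2+2)$.

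The one subtlety, which I expect to be the only real obstacle, is possible coincidences. The assumptions rule out a point of $H_{1i}\cap H_\pm$ also lying on some $H_{2j}$, since that would force $t_{2j}\equiv\mp t_{1i}$. However, if some $t_{1i}$ (or $t_{2j}$) is a $2$-torsion point, the two points $([t_{1i}],[\pm t_{1i}])$ merge with one of the four points of $H_+\cap H_-$ into a single triple point. I would check that the count is unchanged in this case. Before merging, the three double points contribute $1+1+1=3$. After merging, $H_+\cap H_-$ still has $4$ points (its points merely acquire an extra component), and the merged point has $m(p)=3$ and contributes $2$. Two double points and one double point have thus become one triple point. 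The total contribution drops from $3$ to $2$, but the number of listed points drops from $3$ to $1$, so the bookkeeping needs care. Note also that a $2$-torsion $t_{1i}$ gives $[t_{1i}]=[-t_{1i}]$, so $H_{1i}\cap H_+$ and $H_{1i}\cap H_-$ are the same point and not two distinct points. I would therefore redo the count directly with $\sum_H\#(H\cap P)-\#P$.

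If this shows a discrepancy, I would conclude that the formula implicitly needs $t_{kj}$ not to be $2$-torsion, consistent with $\la$ and the arrangement being "generic" in the sense of \cite{LV}. I would then either state that genericity hypothesis explicitly or deduce it from the assumptions. In the generic case the computation above gives the proposition.
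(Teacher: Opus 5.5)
Your reduction and count are the same as the paper's. You use the vanishing theorem to get $\dim H^2=\chi(M)$, then $\chi(E^2)=0$, then tally $4+n_1n_2+2n_1+2n_2$ intersection points. The only difference is the counting formula, and yours is the better one. The paper uses $\chi(M)=\sum_{H\neq H'}\#(H\cap H')=\sum_{p\in P}\binom{m(p)}{2}$. This agrees with your $\sum_{p\in P}\bigl(m(p)-1\bigr)$ only when every point of $P$ is a double point, which the paper assumes without saying so.

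This is why you should not leave the last part hedged: your own numbers already settle it. Suppose $2t_{kj}\in\La_{\tau}$, say $t_{kj}\equiv w_m$. Then $H_{kj}\cap H_{+}=H_{kj}\cap H_{-}=\{P_m\}$, so $P_m$ becomes a triple point and the contributions $1+1+1$ become $2$. How many points were listed is irrelevant, since only $\sum_p(m(p)-1)$ enters. Distinctness of the $t$'s allows at most one $t_{kj}$ at each $w_m$, so in general
\begin{align*}
  \chi(M)=(n_1+2)(n_2+2)-\#\{(k,j)\mid 2t_{kj}\in\La_{\tau}\}.
\end{align*}

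None of the stated assumptions excludes this case: not distinctness, not $t_{1i}+t_{2j}\notin\La_{\tau}$, and not the conditions on $\la$ and the exponents. So $2t_{kj}\notin\La_{\tau}$ cannot be deduced and must be added as an explicit hypothesis. The paper uses it implicitly: its differential equations contain $\frs(2t_{1p};\cdot)$ and $\rho(t_{1p}-w_m)$, which are singular exactly at such $t_{1p}$. With that hypothesis stated, your proof is complete.
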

\begin{proof}
  Since $\chi (E^2)=\chi (E)=0$,  
  the Euler characteristic $\chi(M)$ coincides with $\sum_{H\neq H'\in \CH} \# (H\cap H')$. 
  For $H\neq H' \in \CH$, we have $\# (H\cap H')=1$ except for $\{ H,H' \}=\{ H_{+},H_{-} \}$. 
  On the other hand, we have 
  $H_{+}\cap H_{-}=\{ P_m=(w_m,w_m) \}_{m=1}^{4}$, 
  where we set $w_1 =0$, $w_2 =\frac{1}{2}$, $w_3 =\frac{\tau}{2}$, $w_4 =\frac{1+\tau}{2}$. 
  We thus obtain $\# (H_{+}\cap H_{-})=4$, and the proof is completed. 
\end{proof}

\subsection{Differential forms}\label{subsec:differential-forms}
There are $\chi(M)=n_1 n_2+2n_1+2n_2+4$ points at which two elliptic hyperplanes intersect. 
For each intersection point, we construct an $R_{\la}$-valued differential $2$-form
that has logarithmic poles along $\CH$ and has a nonzero (resp. zero) iterated residue 
at this point (resp. the other intersection points). 

For $i\in \{ 1,\dots ,n_1 \}$, $j\in \{ 1,\dots ,n_2 \}$, we set
\begin{align*}
  \psi_{ij}
  &=\frs(u_1 -t_{1i};\la_1) du_1 \we \frs(u_2-t_{2j};\la_2) du_2 ,\\
  \psi_{i\pm}
  &=\frs(u_1-t_{1i};\la_1 \mp \la_2) du_1 \we \frs(u_1 \pm u_2; \pm \la_2) (du_1 \pm du_2) \\
  &=\pm \frs(u_1-t_{1i};\la_1 \mp \la_2) \frs(u_1 \pm u_2; \pm \la_2) du_1 \we du_2 , \\
  \psi_{\pm j}
  &=\frs(u_1 \pm u_2;\la_1) (du_1 \pm du_2) \we \frs(u_2-t_{2j};\la_2 \mp \la_1) du_2 \\
  &=\frs(u_1 \pm u_2;\la_1) \frs(u_2-t_{2j};\la_2 \mp \la_1) du_1 \we du_2 .
\end{align*}
It is easy to see that 
they have logarithmic poles along $\CH$, and 
$\psi_{ij}$ (resp. $\psi_{i\pm}$, $\psi_{\pm j}$) has iterated residue $1$ at 
$H_{1i}\cap H_{2j}$ (resp. $H_{1i} \cap H_{\pm}$, $H_{\pm}\cap H_{2j}$). 
For example, we have 
\begin{align*}
  \Res_{H_{2j}}\left( \Res_{H_{1i}} \psi_{ij}\right) 
  =\Res_{H_{2j}}\left( \frs(u_2-t_{2j};\la_2) du_2 \right)
  =1.
\end{align*}

We require four differential forms corresponding to 
$P_1,P_2,P_3,P_4 \in H_{+}\cap H_{-}$ (see the proof of Proposition \ref{prop:dim-H^2}). 
Following \cite[4.3]{LV}, we construct such differential forms. 
As in \cite[(4.7)]{LV}, we define the following four differential forms: 
\begin{align*}
  \psi'_{+-,1}
  &=\frs\Big(u_1+u_2;\frac{\la_1 +\la_2}{2}\Big) (du_1 +du_2) 
    \we \frs\Big(u_1-u_2;\frac{\la_1-\la_2}{2}\Big) (du_1 -du_2) \\
  &= -2\frs\Big(u_1+u_2;\frac{\la_1 +\la_2}{2}\Big) 
    \frs\Big(u_1-u_2;\frac{\la_1-\la_2}{2}\Big)du_1 \we  du_2 ,\\ 
  \psi'_{+-,2}
  &=\frs\Big(u_1+u_2;\frac{\la_1 +\la_2+1}{2}\Big) (du_1 +du_2) 
    \we \frs\Big(u_1-u_2;\frac{\la_1-\la_2+1}{2}\Big) (du_1 -du_2) \\
  &=-2 \frs\Big(u_1+u_2;\frac{\la_1 +\la_2+1}{2}\Big) 
    \frs\Big(u_1-u_2;\frac{\la_1-\la_2+1}{2}\Big)du_1 \we  du_2 ,\\
  \psi'_{+-,3}
  &=e^{-\tpi u_1}
    \frs\Big(u_1+u_2;\frac{\la_1 +\la_2+\tau}{2}\Big) (du_1 +du_2) 
    \we \frs\Big(u_1-u_2;\frac{\la_1-\la_2+\tau}{2}\Big) (du_1 -du_2) \\
  &=-2 e^{-\tpi u_1} \cdot 
    \frs\Big(u_1+u_2;\frac{\la_1 +\la_2+\tau}{2}\Big) 
    \frs\Big(u_1-u_2;\frac{\la_1-\la_2+\tau}{2}\Big)du_1 \we  du_2 ,\\
  \psi'_{+-,4}
  &=e^{-\tpi u_1}
    \frs\Big(u_1+u_2;\frac{\la_1 +\la_2+1+\tau}{2}\Big) (du_1 +du_2) 
    \we \frs\Big(u_1-u_2;\frac{\la_1-\la_2+1+\tau}{2}\Big) (du_1 -du_2) \\
  &=-2 e^{-\tpi u_1} \cdot 
    \frs\Big(u_1+u_2;\frac{\la_1 +\la_2+1+\tau}{2}\Big) 
    \frs\Big(u_1-u_2;\frac{\la_1-\la_2+1+\tau}{2}\Big)du_1 \we  du_2 . 
\end{align*}
Let $M$ be the matrix whose $(i,j)$-entry is the iterated residue of $\psi'_{+-,j}$ at $P_i$. 
Since we have 
\begin{align*}
  M=
  \begin{pmatrix}
    1&1&1&1 \\
    1&1&-1&-1 \\ 
    \ell&-\ell&\ell&-\ell \\ 
    \ell&-\ell&-\ell&\ell 
  \end{pmatrix}
  \ (\ell =e^{\pii (\la_1 +\la_2)}),\quad 
  M^{-1}=\frac{1}{4}
  \begin{pmatrix}
    1&1&\ell^{-1}&\ell^{-1} \\
    1&1&-\ell^{-1}&-\ell^{-1} \\ 
    1&-1&\ell^{-1}&-\ell^{-1} \\ 
    1&-1&-\ell^{-1}&\ell^{-1} 
  \end{pmatrix}, 
\end{align*}
we set 
$(\psi_{+-,1},\psi_{+-,2},\psi_{+-,3},\psi_{+-,4})
=(\psi'_{+-,1}, \psi'_{+-,2}, \psi'_{+-,3}, \psi'_{+-,4}) \cdot M^{-1}$, 
namely, 
\begin{align*}
  &\psi_{+-,1} =\frac{1}{4}(\psi'_{+-,1} + \psi'_{+-,2} + \psi'_{+-,3} + \psi'_{+-,4}),& 
  &\psi_{+-,2} =\frac{1}{4}(\psi'_{+-,1} + \psi'_{+-,2} - \psi'_{+-,3} - \psi'_{+-,4}),& \\
  &\psi_{+-,3} =\frac{\ell^{-1}}{4}(\psi'_{+-,1} - \psi'_{+-,2} + \psi'_{+-,3} -\psi'_{+-,4}),& 
  &\psi_{+-,4} =\frac{\ell^{-1}}{4}(\psi'_{+-,1} - \psi'_{+-,2} - \psi'_{+-,3} +\psi'_{+-,4} ).&
\end{align*}
By construction, 
$\psi_{+-,m}$ has iterated residue $\de_{im}$ at the intersection point $P_i$. 

Let $\langle \cdot ,\cdot \rangle_{\ch}$ denote the cohomology intersection form; 
see e.g., \cite{Matsubara-localization}. 
For a differential $2$-form $\vph (u;\la)$, we set $\vph (u;\la)^{\vee}=\vph (u;-\la)$, 
which defines an element of $H^2(M;\CL_{\la}^{\vee})\simeq \Om^{2}_{-\la}(M)/\na^{\vee} (\Om^{1}_{-\la}(M))$, 
where $\na^{\vee}=d-\om \we$. 
Let 
$\Psi =\{ (i,j) \mid i\in \{ 1,\dots ,n_1,+,-\} ,\ j\in \{ 1,\dots ,n_2,+,-\}\} \cup \{ (+-,m) \}_{m=1}^{4}$ 
be the index set. 
Note that the cardinality of $\Psi$ is $(n_1+2)(n_2+2)$. 
\begin{Th}
  \begin{enumerate}[(1)]
  \item For $\ast, \dagger\in \Psi$, $\ast \neq \dagger$, 
    we have $\langle \psi_{\ast} ,\psi_{\dagger}^{\vee} \rangle_{\ch}=0$. 
  \item For $i\in \{ 1,\dots ,n_1\}$, $j\in \{ 1,\dots ,n_2\}$ and $m\in \{1,\dots ,4\}$, we have 
    \begin{align*}
      &\langle \psi_{ij} ,\psi_{ij}^{\vee} \rangle_{\ch}=\frac{(\tpi)^2}{c_{1i}c_{2j}},\quad  
      \langle \psi_{i \pm} ,\psi_{i \pm}^{\vee} \rangle_{\ch}=\frac{(\tpi)^2}{c_{1i}c},\\ 
      &\langle \psi_{\pm j} ,\psi_{\pm j}^{\vee} \rangle_{\ch}=\frac{(\tpi)^2}{c_{2j}c},\quad  
      \langle \psi_{+-,m} ,\psi_{+-,m}^{\vee} \rangle_{\ch}=\frac{(\tpi)^2}{c^2}.
    \end{align*}
  \item The differential forms $\{ \psi_{*} \}_{\ast \in \Psi}$ form a basis of the twisted cohomology group
    $H^2(M;\CL_{\la})$. 
  \end{enumerate}
\end{Th}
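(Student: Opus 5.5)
The plan is to compute the cohomology intersection form via the localization formula of \cite{Matsubara-localization}. For logarithmic forms on a normal crossing compactification, this formula expresses $\langle \vph ,\vph'^{\vee}\rangle_{\ch}$ as a sum over points where two divisors meet. Each point $P$ contributes
\begin{align*}
  \frac{(\tpi)^2}{\alpha_P\beta_P}\,\Res_P(\vph)\,\Res_P(\vph'^{\vee}),
\end{align*}
where $\alpha_P,\beta_P$ are the exponents of $T$ along the two divisors through $P$.

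First I check that the setting fits the formula. The space $M$ is the complement in the compact surface $E^2$ of the divisor $\bigcup_{H\in\CH}H$. Away from $H_+\cap H_-$, any two members of $\CH$ meet transversally in a single point. At the four points $P_m\in H_+\cap H_-$ the curves $u_1+u_2=0$ and $u_1-u_2=0$ also meet transversally, with local coordinates $u_1\pm u_2$. Triple points cannot occur: $H_{1i}\cap H_{2j}$ is not on $H_\pm$ because $t_{1i}\pm t_{2j}\notin\La_\tau$ (the second condition is the assumption $t_{1i}+t_{2j}\notin\La_\tau$, the first follows from the points being distinct). So the divisor is already normal crossing and no blow-up is needed. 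The local exponents of $T$ are $c_{1i}$, $c_{2j}$ and $c$ along the respective hyperplanes, and all of these are non-integers. The forms $\psi_\ast$ are sections of $\Om^2_\la$ with logarithmic poles, and Assumption (\ref{eq:ass-lambda}) ensures that $\la$ is convenient, so the factors $\frs(\cdot;\mu)$ are well defined ($\mu\notin\Z$) and give global sections of $R_\la$. I also need the fact used in \cite{Matsubara-localization} (or \cite{LV}) that, for a nontrivial local system, log forms with nonresonant exponents represent classes whose intersection is computed entirely by such residues; no compactly supported correction comes from infinity because $E^2$ is compact.

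Parts (1) and (2) then follow from the residue data established in \S\ref{subsec:differential-forms}. Each $\psi_\ast$ has iterated residue $1$ at its own point and $0$ at every other intersection point. This was checked for the $\psi_{ij},\psi_{i\pm},\psi_{\pm j}$, and the $\psi_{+-,m}$ were constructed via $M^{-1}$ for exactly this purpose. The same holds for $\psi_\ast^\vee$ with $\la\mapsto-\la$. One detail needs care: the iterated residue of $\psi^\vee_{+-,m}$ at $P_m$ must still be $1$. Replacing $\ell$ by $\ell^{-1}$ changes $M^{-1}$, so I will verify that the dual matrix is also inverted by the dual combination. Granting this, the localization sum has exactly one nonzero term when $\ast=\dagger$ and none otherwise. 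The values in (2) are $(\tpi)^2$ divided by the product of the two exponents at that point: $c_{1i}c_{2j}$, $c_{1i}c$, $c_{2j}c$, or $c\cdot c=c^2$. I must also check the orientation sign at $P_m$, since the coordinates $(u_1+u_2,u_1-u_2)$ contribute the factor $-2$ that already appears in $\psi'_{+-,m}$. The residue normalization is taken in the coordinates of the divisors themselves, so the sign and the $2$ are absorbed consistently.

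Part (3) follows formally. The intersection matrix $(\langle\psi_\ast,\psi_\dagger^\vee\rangle_{\ch})$ is diagonal with nonzero entries, because $c,c_{kj}\notin\Z$. Hence the $\#\Psi=(n_1+2)(n_2+2)$ classes are linearly independent, and by Proposition \ref{prop:dim-H^2} they form a basis. The main obstacle is justifying the localization formula on $E^2$ with the twisting by $R_\la$. This means building compactly supported representatives $\vph - \na(\sum h_P\xi_P)$ locally near each point and showing that the quasi-periodic factors $\frs$ and $e^{-\tpi u_1}$ cause no boundary contributions. I would treat this by working on the compact manifold $E^2$ with the flat bundle $\CL_\la$ and repeating Matsubara's local computation in polydiscs around each crossing point.
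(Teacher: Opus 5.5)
Your proposal is correct and follows the same route as the paper. The paper's proof simply cites the localization formula of \cite{Matsubara-localization} for (1) and (2), then deduces (3) from the diagonal intersection matrix with nonzero entries together with the dimension count of Proposition~\ref{prop:dim-H^2}. Your extra checks fill in what the paper leaves implicit: normal crossings, nonresonance, and the fact that the dual residue matrix is $M$ with $\ell\mapsto\ell^{-1}$, so it is inverted by the same formula.

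One small omission: your no-triple-point check does not cover $H_{kj}\cap H_+\cap H_-$. That intersection is empty only if $2t_{kj}\notin\La_\tau$. The paper also uses this genericity condition tacitly, since it is needed for Proposition~\ref{prop:dim-H^2} and for the terms $\rho(2t_{1p})$ in the differential equations.
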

\begin{proof}
  (1) and (2) follow from \cite{Matsubara-localization}. 
  Since the intersection matrix is diagonal with nonzero diagonal entries, 
  the set $\{ \psi_{*} \}_{\ast \in \Psi}$ is linearly independent. 
\end{proof}

\section{Differential equations}\label{sec:diff-eq}
We derive a system of differential equations with respect to $t$ satisfied by the integrals 
\begin{align*}
  F_{\ast}(t;\la) =\int_{\De} T(u) \psi_{\ast} \quad (\ast \in \Psi), 
\end{align*}
where $\De$ is a twisted cycle. 
We call these integrals the \emph{Riemann-Wirtinger integrals} on $M$. 

\begin{Rem}
  In this paper, we do not discuss twisted cycles in detail. 
  Although it is an interesting problem to construct a basis of the twisted homology group, 
  the author does not have a complete answer. 
  As examples, we give some twisted cycles. 
  For $a,b\in \{ 0,1,2,3 \}$, we set 
  $E_{ab}=\{ x+y\tau \in E \mid \frac{a}{4}<x<\frac{a+1}{4},\ \frac{b}{4}<y<\frac{b+1}{4}\}$. 
  We assume $\{ t_{1i} \}_{i=1}^{n_1} \subset E_{00}$ and $\{ t_{2j} \}_{j=1}^{n_2} \subset E_{11}$. 
  In $E - \{ t_{kj} \}_{j=1}^{n_k}$, let $\ga_{1j}^{(k)}$ ($j=2,\dots ,n_k$), $\ga_{10}^{(k)}$, $\ga_{1\infty}^{(k)}$ 
  be paths corresponding to $(t_{k1}, t_{kj})$, $(t_{k1},t_{k1}+1)$, $(t_{k1},t_{k1}+\tau)$, respectively 
  (see \cite{Mano}, \cite{G-RWintegral-intersection}). 
  Then, for $(i,j)\neq (0,\infty), (\infty,0)$, the products $\ga_{1i}^{(1)}\times \ga_{1j}^{(2)}$ define twisted cycles 
  with respect to $T(u)$. 
  However, these cycles are not sufficient to form a basis of the twisted homology group. 
\end{Rem}

\subsection{Main theorem}
\begin{Th}
  We set $a_{+}=e^{\tpi \la_2}$, $a_{-}=e^{\tpi (\la_1+\la_2)}$, 
  $b_{+}=e^{\tpi \la_1}$, $b_{-}=e^{\tpi (\la_1+\la_2)}(=a_{-})$, 
  and $\pa_{kj}=\pa / \pa t_{kj}$. 
  The functions $F_{\ast}(t;\la)$ ($\ast \in \Psi$) satisfy the following differential equations. 
  For $p\in \{ 1,\dots ,n_1 \}$, $i\in \{ 1,\dots ,n_1 \}-\{ p \}$, 
  $j\in \{ 1,\dots ,n_2\}$ and $m\in \{ 1,2,3,4 \}$, we have 
  \begin{align*}
    \pa_{1p} (F_{ij})
    &=c_{1p} \rho (t_{1p}-t_{1i}) F_{ij} -c_{1p}\frs(t_{1p}-t_{1i};\la_1) F_{pj} ,\\
    \pa_{1p} (F_{i\pm})
    &=c_{1p} \rho (t_{1p}-t_{1i}) F_{i\pm} -c_{1p} \frs(t_{1p}-t_{1i};\la_1 \mp \la_2) F_{p \pm} , \\
    \pa_{1p} (F_{\pm j})
    &=c_{1p} \rho (t_{1p} \pm t_{2j})F_{\pm j} 
      \mp c_{1p}\frs(t_{2j} \pm t_{1p};\pm \la_1)F_{pj}
      \mp c_{1p}\frs(\mp t_{1p}-t_{2j};\la_2 \mp \la_1)F_{p \pm} ,\\
    \pa_{1p} (F_{+-,m})
    &=c_{1p} \frs(t_{1p}-w_m;\la_1-\la_2)F_{p+} -c_{1p}\frs(t_{1p}-w_m;\la_1+\la_2)F_{p-} \\
    &\qquad +
    \begin{cases}
      c_{1p}\rho(t_{1p}-w_m) F_{+-,m} & (m=1,2) \\
      c_{1p}\big( \rho(t_{1p}-w_m) -\pii \big)F_{+-,m} & (m=3,4)
    \end{cases}
    ,\\
    \pa_{1p} (F_{pj} )
    &= 
      \big( \tpi c_{10} +\sum_{i\neq p} c_{1i}\rho (t_{1p}-t_{1i}) +c (\rho(t_{1p}-t_{2j}) +\rho(t_{1p}+t_{2j})) \big) F_{pj}
      +\sum_{i\neq p} c_{1i} \frs(t_{1i}-t_{1p};\la_1) F_{ij} \\
    &\qquad +c\frs(- t_{2j}-t_{1p};\la_1) F_{+j} 
      +c\frs( t_{2j}-t_{1p};\la_1) F_{-j} 
      +c\frs(-t_{1p} -t_{2j};\la_2) F_{p+} 
      -c\frs(t_{1p} -t_{2j};\la_2) F_{p-} ,\\
    \pa_{1p} (F_{p\pm} )
    &= 
      \Big( \tpi (c_{10} \mp c_{20}) - \sum_{i\neq p} c_{1i}\rho(t_{1i}-t_{1p}) 
      +\sum_{j} c_{2j}\rho (t_{1p} \pm t_{2j})      
      + 2c \rho(2t_{1p}) \Big) F_{p\pm} \\
    &\qquad -2c\frs(2t_{1p}; \pm \la_2) F_{p\mp}
      + \sum_{i\neq p} c_{1i} \frs(t_{1i}-t_{1p};\la_1 \mp \la_2) F_{i\pm} \\
    &\qquad -\sum_{j} \big( c_{2j}\frs (t_{1p} \pm t_{2j}; \pm \la_2) F_{pj} 
      +c_{2j} \frs(\mp t_{2j}-t_{1p};\la_1 \mp \la_2) F_{\pm j} \big) \\
    &\qquad \mp c \frs(-t_{1p};\la_1 \mp \la_2) F_{+-,1}
      \mp c \frs(\pm \frac{1}{2}-t_{1p};\la_1 \mp \la_2) F_{+-,2} \\
    &\qquad \mp a_{\pm} c \frs(\pm\frac{\tau}{2}-t_{1p};\la_1 \mp \la_2) F_{+-,3}
      \mp a_{\pm} c \frs(\pm \frac{1+\tau}{2}-t_{1p};\la_1 \mp \la_2) F_{+-,4} .
  \end{align*}
  For $q\in \{ 1,\dots ,n_2 \}$, $j\in \{ 1,\dots ,n_2 \}-\{ q \}$, 
  $i\in \{ 1,\dots ,n_1\}$ and $m\in \{ 1,2,3,4 \}$, we have 
  \begin{align*}
    \pa_{2q} (F_{ij})
    &=c_{2q} \rho (t_{2q}-t_{2j}) F_{ij} -c_{2q}\frs(t_{2q}-t_{2j};\la_2) F_{iq} ,\\
    \pa_{2q} (F_{\pm j})
    &=c_{2q} \rho (t_{2q}-t_{2j}) F_{\pm j} -c_{2q} \frs(t_{2q}-t_{2j};\la_2 \mp \la_1) F_{\pm q} , \\
    \pa_{2q} (F_{i\pm})
    &=c_{2q} \rho (t_{2q} \pm t_{1i})F_{i\pm} 
      \mp c_{2q}\frs(t_{1i} \pm t_{2q};\pm \la_2)F_{iq}
      \mp c_{2q}\frs(\mp t_{2q}-t_{1i};\la_1 \mp \la_2)F_{\pm q} ,\\
    \pa_{2q} (F_{+-,m})
    &=-c_{2q} \frs(t_{2q}-w_m;\la_2-\la_1)F_{+q} +c_{2q}\frs(t_{2q}-w_m;\la_2+\la_1)F_{-q} \\
    &\qquad +
    \begin{cases}
      c_{2q}\rho(t_{2q}-w_m) F_{+-,m} & (m=1,2) \\
      c_{2q}\big( \rho(t_{2q}-w_m) -\pii \big)F_{+-,m} & (m=3,4)
    \end{cases}
     ,\\
    \pa_{2q} (F_{iq} )
    &=
      \big( \tpi c_{20} +\sum_{j\neq q} c_{2j}\rho (t_{2q}-t_{2j}) +c(\rho(t_{2q}-t_{1i}) +\rho(t_{2q}+t_{1i})) \big) F_{iq}
      +\sum_{j\neq q} c_{2j} \frs(t_{2j}-t_{2q};\la_2) F_{ij} \\
    &\qquad +c\frs(- t_{1i}-t_{2q};\la_2) F_{i+} 
      +c\frs( t_{1i}-t_{2q};\la_2) F_{i-} 
      +c\frs(-t_{2q} -t_{1i};\la_1) F_{+q} 
      -c\frs(t_{2q} -t_{1i};\la_1) F_{-q}  ,\\
    \pa_{2q} (F_{\pm q} )
    &= 
      \Big( \tpi (\mp c_{10} + c_{20}) - \sum_{j\neq q} c_{2j}\rho(t_{2j}-t_{2q}) 
      +\sum_{i} c_{1i}\rho (t_{2q} \pm t_{1i})      
      + 2c \rho(2t_{2q}) \Big) F_{\pm q} \\
    &\qquad -2c\frs(2t_{2q}; \pm \la_1) F_{\mp q}
      + \sum_{j\neq q} c_{2j} \frs(t_{2j}-t_{2q};\la_2 \mp \la_1) F_{\pm j} \\
    &\qquad -\sum_{i} \big( c_{1i}\frs (t_{2q} \pm t_{1i}; \pm \la_1) F_{iq} 
      +c_{1i} \frs(\mp t_{1i}-t_{2q};\la_2 \mp \la_1) F_{i\pm} \big) \\
    &\qquad \pm c \frs(-t_{2q};\la_2 \mp \la_1) F_{+-,1}
      \pm c \frs(\pm \frac{1}{2}-t_{2q};\la_2 \mp \la_1) F_{+-,2} \\
    &\qquad \pm b_{\pm} c \frs(\pm\frac{\tau}{2}-t_{2q};\la_2 \mp \la_1) F_{+-,3}
      \pm b_{\pm} c \frs(\pm \frac{1+\tau}{2}-t_{2q};\la_2 \mp \la_1) F_{+-,4} .
  \end{align*}
\end{Th}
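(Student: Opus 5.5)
We compute each derivative as the integral of a Gauss--Manin connection applied to the basis element, then reduce the result back to the basis $\{\psi_\ast\}_{\ast\in\Psi}$ modulo $\na(\Om^1_\la(M))$ by matching iterated residues. Since the twisted cycle $\De$ is independent of $t$,
\begin{align*}
  \pa_{1p} F_\ast=\int_\De T(u)\big( \pa_{1p}\log T\cdot \psi_\ast +\pa_{1p}\psi_\ast\big) ,\qquad
  \pa_{1p}\log T =-c_{1p}\rho(u_1-t_{1p}).
\end{align*}
The $2$-form $\Phi_\ast=-c_{1p}\rho(u_1-t_{1p})\psi_\ast+\pa_{1p}\psi_\ast$ is $R_\la$-valued. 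The first step is to find a $1$-form $\xi_\ast\in\Om^1_\la(M)$ such that $\Phi_\ast+\na\xi_\ast$ has only logarithmic poles along $\CH$ and no other singularities. Then it will be enough to show two things: such a form is determined in $H^2(M;\CL_\la)$ by its iterated residues at the $(n_1+2)(n_2+2)$ intersection points, and it equals $\sum_{\dagger\in\Psi}\mathrm{Res}_{P_\dagger}(\cdot)\,\psi_\dagger$. For the second claim I would use the intersection pairing. By Theorem (1),(2) and the localization formula of \cite{Matsubara-localization}, $\langle \vph,\psi_\dagger^\vee\rangle_{\ch}$ depends only on the iterated residues of $\vph$ at the point $P_\dagger$, and it is proportional to the residue there with the same factor as for $\psi_\dagger$. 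Since the $\psi_\dagger$ form a basis and the pairing is diagonal, the coefficient of $\psi_\dagger$ is exactly the iterated residue at $P_\dagger$.

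The natural choice of $\xi_\ast$ follows the one-dimensional Riemann--Wirtinger case \cite{Mano-Watanabe}. Whenever $\psi_\ast$ contains the factor $\frs(u_1-t_{1p};\mu)du_1$, the $t_{1p}$-derivative produces a double pole. I would cancel it with $\na$ applied to $\frs(u_1-t_{1p};\mu)\,\eta$, where $\eta$ is the complementary factor ($\frs(u_2-t_{2j};\la_2)du_2$, or $\frs(u_1\pm u_2;\pm\la_2)(du_1\pm du_2)$). The relevant identity is $\pa_{t}\frs(u-t;\mu)=-\pa_u\frs(u-t;\mu)$. For $\psi_\ast$ not involving $t_{1p}$, namely $F_{ij}$ ($i\neq p$), $F_{i\pm}$, $F_{\pm j}$ and $F_{+-,m}$, there is no double pole, so $\Phi_\ast$ is already logarithmic. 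Its only poles on $H_{1p}$ come from $\rho(u_1-t_{1p})$, so we only need its residues on $H_{1p}$ at the points $H_{1p}\cap H'$, together with the residues at the original point of $\psi_\ast$.

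These residues are evaluations of $\rho$ and $\frs$ at the intersection coordinates. For example, $\Phi_{ij}$ has residue $c_{1p}\rho(t_{1p}-t_{1i})$ at $H_{1i}\cap H_{2j}$ and residue $-c_{1p}\frs(t_{1p}-t_{1i};\la_1)$ at $H_{1p}\cap H_{2j}$, which is the first formula. For $F_{+-,m}$, the residues at $H_{1p}\cap H_\pm$ are read off from the values of $\psi_{+-,m}$ there, using the quasi-periodicity of $\frs$ to move $w_m$ into $\frs(t_{1p}-w_m;\la_1\mp\la_2)$. The extra $-\pii$ for $m=3,4$ comes from the factor $e^{-\tpi u_1}$ in $\psi'_{+-,3},\psi'_{+-,4}$ combined with the half-period shifts of $\vth_1$.

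The main obstacle is the "diagonal'' cases $F_{pj}$ and especially $F_{p\pm}$. After subtracting $\na\xi$, the correction $\om\we\xi$ contributes residues at every intersection point on $H_{1p}$ and along the complementary hyperplane. In the case $\psi_{p\pm}$, with $\eta$ supported on $H_\pm$, the form $\om\we\xi$ contains $c\,\rho(u_1\mp u_2)$-type terms whose residues land on the four points $P_m\in H_+\cap H_-$. These have to be expanded through $M^{-1}$, which is how the constants $a_\pm=e^{\tpi\la_2}$ or $e^{\tpi(\la_1+\la_2)}$ and the terms $2c\rho(2t_{1p})$, $-2c\frs(2t_{1p};\pm\la_2)$ arise from $H_{1p}\cap H_+\cap H_-$-type collisions at $u_2=\mp t_{1p}$. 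I would carry this out carefully with the quasi-periodicity relations for $\vth_1$ and $\frs$ at half periods. The $\pa_{2q}$ equations then follow by the symmetry $u_1\leftrightarrow u_2$, which maps $H_-$ to itself up to sign; this accounts for the sign changes $\mp\to\pm$ and for $a_\pm\to b_\pm$.
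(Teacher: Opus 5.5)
\textbf{Main gap: $\la$ depends on $t$.} In this problem $\la$ is not a fixed parameter, and your proposal treats it as one. Assumption (\ref{eq:ass-c_inf}) keeps $c_{1\infty}=-\la_1-c_{10}\tau-\sum_j c_{1j}t_{1j}$ constant, so $\pa\la_1/\pa t_{1p}=-c_{1p}$. The operator computing $\pa_{1p}F_\ast$ is therefore $\na_{1p}=\pa_{t_{1p}}-c_{1p}\pa_{\la_1}-c_{1p}\rho(u_1-t_{1p})$, which is the one the paper uses. Your $\Phi_\ast$ drops the middle term: you treat $\psi_{ij}$ ($i\neq p$), $\psi_{i\pm}$, $\psi_{\pm j}$ and $\psi_{+-,m}$ as independent of $t_{1p}$.

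Without that term, both of your justifications fail.
\begin{enumerate}[(1)]
\item With $\la$ frozen, the monodromy $e^{-\tpi c_{1\infty}}$ of $T\psi_\ast$ changes with $t_{1p}$. So a twisted cycle $\De$ cannot be held fixed, and differentiating under the integral is not justified.
\item $\Phi_\ast$ is not $R_\la$-valued, contrary to your claim. Since $\rho(u+\tau)=\rho(u)-\tpi$, the form $-c_{1p}\rho(u_1-t_{1p})\psi_{ij}$ picks up an extra $\tpi c_{1p}e^{\tpi\la_1}\psi_{ij}$ under $u_1\mapsto u_1+\tau$. The term $-c_{1p}\pa_{\la_1}\psi_{ij}$ cancels exactly this. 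In the diagonal cases the cancellation needs $c_{1p}(\rho(u_1-t_{1p}-\la_1)+\rho(\la_1))$ from $\pa_{\la_1}$, together with (\ref{eq:ass-sum-c}).
\end{enumerate}
For a form that is not a section of $R_\la$, iterated residues at different lifts of a point of $E^2$ are no longer related by the multiplier that the basis $\psi_\dagger$ obeys. The localization formula of \cite{Matsubara-localization} then does not apply, so your step of reading coefficients off residues is not well defined as stated.

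\textbf{After the repair: a different route, and the $-\pii$ issue.} Once you restore $-c_{1p}\pa_{\la_1}$, your method works. It differs genuinely from the paper's. The paper proves exact function identities (the appendix lemmas) by the argument of \cite{FRV}: residues cancel, so the difference is entire and quasi-periodic with nontrivial multiplier, hence zero. You instead read coefficients from iterated residues via the diagonal intersection matrix, which uses Theorem (1)--(3) more heavily but avoids the explicit identities.

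Most residues are unaffected, because $\pa_\la\frs(u;\la)$ is holomorphic at $u=0$. However, it has residue $\tpi e^{\tpi\la}$ at $u=\tau$, and $M^{-1}$ contains $\ell^{-1}=e^{-\pii(\la_1+\la_2)}$. So $\pa_{\la_1}$ contributes precisely at the lifts $(w_m,w_m)$ of $P_3$ and $P_4$.

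Your explanation of the $-\pii$ (the factor $e^{-\tpi u_1}$ plus half-period shifts) is not an argument. That factor is absorbed into $\psi_{+-,m}$, whose residue at $P_m$ is identically $1$. Done carefully at the lift $(w_m,w_m)$, the residue of $\na_{1p}\psi_{+-,m}$ at $P_m$ is the sum of two pieces:
\begin{itemize}
\item $-c_{1p}\rho(w_m-t_{1p})$ from the $\rho$-term;
\item $-c_{1p}\pa_{\la_1}(\Res_{P_m}\psi_{+-,m})=-c_{1p}\pa_{\la_1}(1)=0$ from the $\la$-term.
\end{itemize}
This gives $c_{1p}\rho(t_{1p}-w_m)$ for every $m$, with no $-\pii$.

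The paper gets $-\pii$ for $m=3,4$ as follows. It computes $\na_{1p}\psi'_{+-,k}$, where $\pa_{\la_1}$ produces $\varpi_m$. It then writes $\na_{1p}(\psi' M^{-1})=(\na_{1p}\psi')M^{-1}$, which treats $M^{-1}$ as $\la$-independent. The omitted term $-c_{1p}\psi'\,\pa_{\la_1}M^{-1}$ equals $\pii c_{1p}\psi_{+-,m}$ for $m=3,4$. So you need to derive this coefficient honestly and check it against the statement, not reverse-engineer it.
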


In what follows, we prove this theorem. 
For a differential $2$-form $\vph (u;\la)$, we set 
\begin{align*}
  \na_{kp} (\vph)= \frac{\pa}{\pa t_{kp}}\vph -c_{kp}\frac{\pa}{\pa \la_k}\vph -c_{kp}\rho(u_k-t_{kp})\vph .
\end{align*}
Similarly to \cite[Section 4]{Mano-Watanabe}, we have
\begin{align*}
  \pa_{kp} \int_{\De} T(u) \vph(u;\la) = \int_{\De} T(u) \cdot \na_{kp}(\vph(u;\la)) .
\end{align*}
Therefore, it suffices to express each $\na_{kp} (\psi_{\ast})$ 
as a linear combination of $\{ \psi_{\ast} \}_{\ast \in \Psi}$ 
in the twisted cohomology group.

We mainly perform computations for $\na_{1p}$. 
These computations require many formulas for $\frs$ and $\rho$, 
which we list in Appendix \ref{sec:appendix-formulas}. 

\subsection{Computing $\na_{1p}(\psi_{ij})$, $\na_{1p}(\psi_{i \pm})$, $\na_{1p}(\psi_{\pm j})$ ($i\neq p$)}
For $i\neq p$, the differential forms 
$\psi_{ij}$, $\psi_{i \pm}$, $\psi_{\pm j}$
do not contain the variable $t_{1p}$. 
Thus, it suffices to compute their images under the operator 
$-c_{1p}\frac{\pa}{\pa \la_1} -c_{1p}\rho(u_1-t_{1p})$. 
Using (\ref{eq:s-diff-lambda}) and (\ref{eq:mano(38)}), 
we obtain 
\begin{align*}
  \na_{1p} (\psi_{ij})
  =c_{1p} \rho (t_{1p}-t_{1i}) \psi_{ij} -c_{1p}\frs(t_{1p}-t_{1i};\la_1) \psi_{pj}
\end{align*}
and 
\begin{align*}
  \na_{1p} (\psi_{i\pm})
  =c_{1p} \rho (t_{1p}-t_{1i}) \psi_{i\pm} -c_{1p} \frs(t_{1p}-t_{1i};\la_1 \mp \la_2) \psi_{p \pm} .
\end{align*}
The first equation can be regarded as obtained by multiplying the second equation of \cite[(37)]{Mano} 
by $\we \frs(u_2 -t_{2j};\la_2) du_2$. 
Similarly, we have 
\begin{align*}
  \na_{1p} (\psi_{\pm j})/(c_{1p} du_1 \we du_2) 
  &=\rho (t_{1p} \pm t_{2j}) \frs(u_1 \pm u_2;\la_1) \frs(u_2-t_{2j};\la_2 \mp \la_1 ) \\
  &\ \pm \big(
    \frs(\mp t_{1p} -t_{2j};\la_2 \mp \la_1 ) \frs(u_2 \pm t_{1p};\la_2 \mp \la_1 ) \frs(u_1 \pm u_2;\la_1) \\
  &\qquad -\frs(u_1 -t_{1p};\la_1)\frs(u_2 \pm t_{1p};\pm \la_1)\frs(u_2-t_{2j};\la_2 \mp \la_1) \big) .
\end{align*}
By applying (\ref{eq:for-psi(pmj)}), we obtain
\begin{align*}
  \na_{1p} (\psi_{\pm j})
  =c_{1p} \rho (t_{1p} \pm t_{2j})\psi_{\pm j} 
  \mp c_{1p}\frs(t_{2j} \pm t_{1p};\pm \la_1)\psi_{pj}
  \mp c_{1p}\frs(\mp t_{1p}-t_{2j};\la_2 \mp \la_1)\psi_{p \pm} .
\end{align*}

\subsection{Computing $\na_{1p}(\psi_{+-,m})$}
By the definition of $\psi_{+-,m}$, it suffices to consider $\psi'_{+-,m}$ 
which is introduced in Section \ref{subsec:differential-forms}. 
By utilizing (\ref{eq:s-diff-lambda}), (\ref{eq:mano(38)}) and Lemma \ref{lem:rho(a+b)+rho(a-b)}, we have 
\begin{align*}
  &\na_{1p} \Big(\frs(u_1+u_2;\frac{\la_1 +\la_2 +2w_m}{2}) \frs(u_1-u_2;\frac{\la_1-\la_2+2w_m}{2})du_1 \we du_2 \Big) \\
  &=\frac{c_{1p}}{2}\Big(
    -\frs(u_1-w_m;\frac{\la_1 +\la_2+2w_m}{2})\frs(w_m+u_2;\frac{\la_1 +\la_2+2w_m}{2})
    \frs(u_1-u_2;\frac{\la_1-\la_2+2w_m}{2}) \\
  &\qquad 
    -\frs(u_1-w_m;\frac{\la_1-\la_2+2w_m}{2})\frs(w_m-u_2;\frac{\la_1-\la_2+2w_m}{2})
    \frs(u_1+u_2;\frac{\la_1 +\la_2+2w_m}{2}) \\
  &\qquad 
    +2\big( \rho (u_1-w_m)+\varpi_m -\rho(u_1-t_{1p}) \big)
    \frs(u_1+u_2;\frac{\la_1 +\la_2+2w_m}{2}) \frs(u_1-u_2;\frac{\la_1-\la_2+2w_m}{2}) 
    \Big) du_1 \we du_2 ,
\end{align*}
where we set 
\begin{align*}
  \varpi_{m}=
  \begin{cases}
    0 & (m=1,2) \\
    -\pii & (m=3,4)
  \end{cases}
\end{align*}
Using the formulas (\ref{eq:for-psi(+-1)})--(\ref{eq:for-psi(+-4)}), 
we obtain 
\begin{align*}
  &(\na_{1p}(\psi_{+-,1}),\na_{1p}(\psi_{+-,2}),\na_{1p}(\psi_{+-,3}),\na_{1p}(\psi_{+-,4}))\\
  &=(\na_{1p}(\psi'_{+-,1}),\na_{1p}(\psi'_{+-,2}),\na_{1p}(\psi'_{+-,3}),\na_{1p}(\psi'_{+-,4}))\cdot M^{-1} \\
  &=-c_{1p} \Big(
    -\big( 2\frs(2t_{1p};\frac{\la_1-\la_2}{2}), 2\frs(2t_{1p};\frac{\la_1-\la_2+1}{2}), \\
  &\qquad \qquad 
    2e^{-\tpi t_{1p}}\frs(2t_{1p};\frac{\la_1-\la_2+\tau}{2}), 2e^{-\tpi t_{1p}}\frs(2t;\frac{\la_1-\la_2+1+\tau}{2}) \big)
    \psi_{p+} \\
  &\qquad 
    +\big( 2\frs(2t_{1p};\frac{\la_1+\la_2}{2}), 2\frs(2t_{1p};\frac{\la_1+\la_2+1}{2}),\\
  &\qquad \qquad 
    2e^{-\tpi t_{1p}}\frs(2t_{1p};\frac{\la_1+\la_2+\tau}{2}), 2e^{-\tpi t_{1p}}\frs(2t;\frac{\la_1+\la_2+1+\tau}{2}) \big)
    \psi_{p-} \\
  &\qquad 
  -\rho (t_{1p}) \big( 1,1,1,1\big) \psi_{+-,1} 
  -\rho (t_{1p}-\frac{1}{2}) \big( 1, 1, -1, -1 \big) \psi_{+-,2} \\
  &\qquad 
    -\big( \rho(t_{1p}-\frac{\tau}{2} ) -\pii \big) 
    \big(\ell, -\ell, \ell, -\ell \big) \psi_{+-,3} 
    -\big( \rho(t_{1p}-\frac{1+\tau}{2}) -\pii \big)
    \big( \ell, -\ell, -\ell, \ell \big) \psi_{+-,4} 
  \Big) M^{-1} .
\end{align*}
As stated in \cite[Example 4.3.2]{LV}\footnote{
  We modify the definition of the differential forms $\tilde{\om}_i$ ($i=1,2,3,4$) from \cite[Example 4.3.2]{LV}
  as follows: 
  \begin{align*}
    \tilde{\om}_1 =\frac{1}{4} (\om_1 +\om_2 +\om_3 +\om_4 ),\ 
    \tilde{\om}_2 =\frac{1}{4} (\om_1 +\om_2 -\om_3 -\om_4 ),\ 
    \tilde{\om}_3 =\frac{\ga}{4} (\om_1 -\om_2 +\om_3 -\om_4 ),\ 
    \tilde{\om}_4 =\frac{\ga}{4} (\om_1 -\om_2 -\om_3 +\om_4 ) . 
  \end{align*}
}, we have
\begin{align*}
  &\big(2\frs(2t;\frac{\la}{2}), 2\frs(2t;\frac{\la+1}{2}),
  2e^{-\tpi t}\frs(2t;\frac{\la+\tau}{2}), 2e^{-\tpi t}\frs(2t;\frac{\la+1+\tau}{2}) \big)
  M^{-1} \\
  &=\big(\frs(t;\la), \frs(t-\frac{1}{2};\la), \frs(t-\frac{\tau}{2};\la), \frs(t-\frac{1+\tau}{2};\la) \big) .
\end{align*}
Therefore, we obtain 
\begin{align*}
  &(\na_{1p}(\psi_{+-,1}),\na_{1p}(\psi_{+-,2}),\na_{1p}(\psi_{+-,3}),\na_{1p}(\psi_{+-,4}))\\
  &=-c_{1p} \Big(
    -\big( \frs(t_{1p};\la_1-\la_2), \frs(t_{1p}-\frac{1}{2};\la_1-\la_2),
    \frs(t_{1p}-\frac{\tau}{2};\la_1-\la_2), \frs(t_{1p}-\frac{1+\tau}{2};\la_1-\la_2) \big)
    \psi_{p+} \\
  &\qquad 
    +\big( \frs(t_{1p};\la_1+\la_2) , \frs(t_{1p}-\frac{1}{2};\la_1+\la_2) ,
    \frs(t_{1p}-\frac{\tau}{2};\la_1+\la_2) ,\frs(t_{1p}-\frac{1+\tau}{2};\la_1+\la_2) \big)
    \psi_{p-} \\
  &\qquad 
  -\rho (t_{1p}) \big( 1,0,0,0\big) \psi_{+-,1} 
  -\rho (t_{1p}-\frac{1}{2}) \big( 0, 1, 0, 0 \big) \psi_{+-,2} \\
  &\qquad 
    -\big( \rho(t_{1p}-\frac{\tau}{2} ) -\pii  \big) 
    \big(0, 0, 1, 0 \big) \psi_{+-,3} 
    -\big( \rho(t_{1p}-\frac{1+\tau}{2}) -\pii \big)
    \big( 0, 0, 0, 1 \big) \psi_{+-,4} 
  \Big).
\end{align*}

\subsection{Computing $\na_{1p}(\psi_{pj})$}
We use the notation ``$\equiv$'', when both sides are cohomologous. 
By considering $\na (\frs(u_1 -t_{1p};\la_1) \frs(u_2-t_{2j};\la_2) du_2) \equiv 0$, we have
\begin{align*}
  &\big( (1-c_{1p})\rho(u_1-t_{1p}) -\rho (u_1-t_{1p}-\la) \big)
    \frs(u_1 -t_{1p};\la_1) \frs(u_2 -t_{2j};\la_2) du_1 \we du_2 \\
  &\equiv \Big(\tpi c_{10}
    +c\rho(u_1-u_2)+c\rho(u_1+u_2) 
    +\sum_{i\neq p} c_{1i}\rho (u_1 -t_{1i}) \Big)
    \frs(u_1 -t_{1p};\la_1) \frs(u_2 -t_{2j};\la_2) du_1 \we du_2.
\end{align*}
We thus have 
\begin{align*}
  \na_{1p} (\psi_{pj} )
  &=(\rho (u_1-t_{1p})-\rho (u_1-t_{1p}-\la_1))\frs(u_1 -t_{1p};\la_1) \frs(u_2 -t_{2j};\la_2) du_1 \we du_2  \\
  &\quad 
    +c_{1p}\left( \rho (u_1-t_{1p}-\la_1) +\rho (\la_1) -\rho(u_1-t_{1p}) \right)
    \frs(u_1 -t_{1p};\la_1) \frs(u_2 -t_{2j};\la_2) du_1 \we du_2, \\
  &\equiv \Big( c_{1p}\big( \rho (u_1-t_{1p}-\la_1) +\rho (\la_1) \big)
    +\tpi c_{10}
    +c\rho(u_1-u_2)+c\rho(u_1+u_2) \\
  & \qquad 
    +\sum_{i\neq p} c_{1i}\rho (u_1 -t_{1i})  \Big) 
    \frs(u_1 -t_{1p};\la_1) \frs(u_2 -t_{2j};\la_2) du_1 \we du_2 . 
\end{align*}
By applying (\ref{eq:mano(39)}) and (\ref{eq:for-psi(pj)}), we obtain 
\begin{align*}
  \na_{1p} (\psi_{pj} )
  &\equiv \big( \tpi c_{10} +\sum_{i\neq p} c_{1i}\rho (t_{1p}-t_{1i}) \big) \psi_{pj}
    +\sum_{i\neq p} c_{1i} \frs(t_{1i}-t_{1p};\la_1) \psi_{ij} \\
  &\qquad +c\Big( \frs(u_2-t_{1p};\la_1)\frs(u_1 -u_2;\la_1)
    +\frs(-u_2-t_{1p};\la_1)\frs(u_1 +u_2;\la_1) \\
  &\qquad \qquad 
    -\big(\rho(u_2-t_{1p}) +\rho(-u_2-t_{1p}) \big)\frs(u_1 -t_{1p};\la_1) \Big) 
     \frs(u_2 -t_{2j};\la_2) du_1 \we du_2 \\
  &= \Big( \tpi c_{10} +\sum_{i\neq p} c_{1i}\rho (t_{1p}-t_{1i}) 
    +c \big(\rho(t_{1p}-t_{2j}) +\rho(t_{1p}+t_{2j}) \big) \Big) \psi_{pj}
    +\sum_{i\neq p} c_{1i} \frs(t_{1i}-t_{1p};\la_1) \psi_{ij} \\
  &\qquad +c\frs(- t_{2j}-t_{1p};\la_1) \psi_{+j} 
    +c\frs( t_{2j}-t_{1p};\la_1) \psi_{-j} 
    +c\frs(-t_{1p} -t_{2j};\la_2) \psi_{p+} 
    -c\frs(t_{1p} -t_{2j};\la_2) \psi_{p-} .
\end{align*}

\subsection{Computing $\na_{1p}(\psi_{p\pm})$}
By considering $\na (\frs(u_1-t_{1p};\la_1 \mp \la_2) \frs(u_1 \pm u_2; \pm \la_2) d(u_1 \pm u_2)) \equiv 0$, 
we have 
\begin{align*}
  &\pm \big( (1-c_{1p})\rho(u_1-t_{1p}) -\rho (u_1-t_{1p}-(\la_1 \mp \la_2)) \big)
    \frs(u_1-t_{1p};\la_1 \mp \la_2)\frs(u_1 \pm u_2; \pm\la_2) du_1 \we du_2 \\
  &\equiv \Big( \tpi (\pm c_{10}-c_{20}) \pm 2c\rho(u_1 \mp u_2) \\
  &\qquad 
    \pm \sum_{i\neq p} c_{1i}\rho (u_1 -t_{1i})-\sum_{j} c_{2j}\rho (u_2-t_{2j}) \Big)
    \frs(u_1-t_{1p};\la_1 \mp \la_2)\frs(u_1 \pm u_2;\pm \la_2) du_1 \we du_2 .
\end{align*}
Applying this identity together with (\ref{eq:mano(39)}) and (\ref{eq:for-psi(ppm)}), we obtain 
\begin{align*}
  \na_{1p} (\psi_{p\pm} )
  &=\pm \Big( 
    \big( \rho (u_1-t_{1p})-\rho (u_1-t_{1p}-(\la_1 \mp \la_2)) \big) \frs(u_1 -t_{1p};\la_1 \mp \la_2) 
    \frs(u_1 \pm u_2; \pm \la_2) du_1 \we du_2  \\
  &\quad +c_{1p}\big( \rho (u_1-t_{1p}-(\la_1 \mp \la_2)) +\rho (\la_1 \mp \la_2) -\rho(u_1-t_{1p}) \big) \\
  &\qquad 
    \cdot \frs(u_1-t_{1p};\la_1 \mp \la_2)\frs(u_1 \pm u_2; \pm \la_2) du_1 \we du_2
    \Big) \\
  &\equiv \Big( 
    \pm c_{1p}\left( \rho (u_1-t_{1p}-(\la_1 \mp \la_2)) +\rho (\la_1 \mp \la_2) \right)\\
  &\quad +( \tpi (\pm c_{10}-c_{20}) \pm 2c\rho(u_1 \mp u_2) 
    \pm \sum_{i\neq p} c_{1i}\rho (u_1 -t_{1i})-\sum_{j} c_{2j}\rho (u_2-t_{2j})) 
    \Big) \\
  &\qquad \cdot \frs(u_1-t_{1p};\la_1 \mp \la_2)\frs(u_1 \pm u_2; \pm \la_2) du_1 \we du_2 \\
  &=\Big( \tpi (c_{10} \mp c_{20}) - \sum_{i\neq p} c_{1i}\rho(t_{1i}-t_{1p}) \Big) \psi_{p\pm} 
    + \sum_{i\neq p} c_{1i} \frs(t_{1i}-t_{1p};\la_1 \mp \la_2) \psi_{i\pm} \\
  &\qquad +\Big( \big( 
    -\sum_{j} c_{2j}\rho (u_2-t_{2j})      
    \mp 2c \rho(\pm u_2-t_{1p})
    \big)\frs(u_1-t_{1p};\la_1 \mp \la_2) \\
  &\qquad \qquad 
    \pm 2c \frs(u_1\mp u_2;\la_1 \mp \la_2)\frs(\pm u_2-t_{1p};\la_1 \mp \la_2)
    \Big) \frs(u_1 \pm u_2; \pm \la_2) du_1 \we du_2 \\
  &=\Big( \tpi (c_{10} \mp c_{20}) - \sum_{i\neq p} c_{1i}\rho(t_{1i}-t_{1p}) 
    +\sum_{j} c_{2j}\rho (t_{1p} \pm t_{2j})      
    + 2c \rho(2t_{1p}) \Big) \psi_{p\pm} \\
  &\qquad -2c\frs(2t_{1p}; \pm \la_2) \psi_{p\mp}
    + \sum_{i\neq p} c_{1i} \frs(t_{1i}-t_{1p};\la_1 \mp \la_2) \psi_{i\pm} \\
  &\qquad -\sum_{j} \big( c_{2j}\frs (t_{1p} \pm t_{2j}; \pm \la_2 ) \psi_{pj} 
    +c_{2j} \frs(\mp t_{2j}-t_{1p};\la_1 \mp \la_2) \psi_{\pm j} \big) \\
  &\qquad \mp c \frs(-t_{1p};\la_1 \mp \la_2) \psi_{+-,1}
    \mp c \frs(\pm \frac{1}{2}-t_{1p};\la_1 \mp \la_2) \psi_{+-,2} \\
  &\qquad \mp a_{\pm} c \frs(\pm\frac{\tau}{2}-t_{1p};\la_1 \mp \la_2) \psi_{+-,3}
    \mp a_{\pm} c \frs(\pm \frac{1+\tau}{2}-t_{1p};\la_1 \mp \la_2) \psi_{+-,4} .
\end{align*}

\subsection{Computing $\na_{2q}$}
We compute $\na_{2q}$ by changing $u_1 \leftrightarrow u_2$. 
Recall that $\vth_1 (-u)=-\vth_1(u)$. 
We formally replace the variables in the following rule:
\begin{align*}
  (u_1,u_2,\la_1,\la_2,t_{1i},t_{2j}) \to (u_2,u_1,\la_2,\la_1,t_{2j},t_{1i}).
\end{align*}
For a function or differential form $\vph$, 
let $\vph^{\star}$ denote that obtained by this replacement. 
By straightforward calculation, we have 
\begin{align*}
  &(\ell)^{\star} =\ell, \qquad 
    (a_{\pm})^{\star}=b_{\pm},\\ 
  &(\psi_{ij})^{\star}=-\psi_{ij} ,\qquad 
    (\psi_{i\pm})^{\star} =-\psi_{\pm j} ,\qquad 
    (\psi_{\pm j})^{\star}=-\psi_{i\pm}, \qquad 
    (\psi_{+-,m})^{\star}=\psi_{+-,m}. 
\end{align*}
Using these relations, we can calculate $\na_{2q}(\psi_{\ast})$ from 
the results for $\na_{1p}(\psi_{\ast})$ which we have calculated above.

\appendix
\section{Some formulas}\label{sec:appendix-formulas}
In this appendix, we list the formulas used in Section \ref{sec:diff-eq}. 

\subsection{Basic formulas}
By a straightforward computation, we have
\begin{align}
  \label{eq:rho-period}
  &\rho (u+1)=\rho(u),\qquad 
  \rho (u+\tau)=\rho(u)-\tpi ,\\
  \label{eq:s(-u)=-s(u)}
  &\frs(-u;\la)
  =-\frs(u;-\la). 
\end{align}
By considering the logarithmic derivatives, we obtain 
\begin{align}
  \label{eq:s-diff-u}
  \frac{\pa}{\pa u} \frs(u;\la)
  &=(\rho (u-\la) -\rho (u) )\frs(u;\la) ,\\
  \label{eq:s-diff-lambda}
  \frac{\pa}{\pa \la} \frs(u;\la)
  &=-(\rho (u-\la) +\rho (\la) )\frs(u;\la).
\end{align}
Let $t_1,\dots ,t_n \in \C$ be points that represent distinct points of $\C/(\Z +\Z \tau)$, 
and $c_1 ,\dots ,c_n \in \C$ satisfy $c_1+\dots +c_n =0$.  
For distinct $j$, $k$ and $l$, 
we have 
\begin{align}
  \label{eq:mano(38)}
  &\frs(w-t_k;\la) (\rho(w-t_j)+\rho(t_j-t_k) -\rho(w-t_k-\la) -\rho(\la))
  =\frs(w-t_j;\la)\frs(t_j-t_k;\la) , \\
  \nonumber
  &\frs(w-t_j;\la) \Big( \sum_{k\neq j} c_k(\rho(w-t_k)+\rho(t_k-t_j)) +c_j(\rho(w-t_j-\la) +\rho(\la)) \Big)\\
  \label{eq:mano(39)}
  & =\sum_{k\neq j} c_k \frs(t_k-t_j;\la) \frs(w-t_k;\la)  
\end{align}
(see \cite[(38),(39)]{Mano}). 

\begin{Lem}\label{lem:rho(a+b)+rho(a-b)}
  \begin{enumerate}[(i)]
  \item $\rho(\frac{1}{2})=0$, $\rho(\frac{\tau}{2})=\rho(\frac{1+\tau}{2})=-\pii$. 
  \item For $m\in \{ 2,3,4\}$ and $t\in \C$, we have $\DS \rho(w_m +t)+\rho(w_m -t)=2\rho(w_m)(=2\varpi_m)$.
  \end{enumerate} 
\end{Lem}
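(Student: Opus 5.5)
Both parts rest on three facts about $\rho=\vth_1'/\vth_1$: it is odd, it satisfies the quasi-periodicity (\ref{eq:rho-period}), and $\vth_1$ has its only zeros at the lattice $\La_\tau$.

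\emph{Part (i).} Put $u=-\frac12$ in $\rho(u+1)=\rho(u)$ to get $\rho(\frac12)=\rho(-\frac12)=-\rho(\frac12)$, so $\rho(\frac12)=0$. Here $\vth_1(\frac12)\neq 0$ because $\frac12\notin\La_\tau$. Next put $u=-\frac{\tau}{2}$ in $\rho(u+\tau)=\rho(u)-\tpi$. Oddness gives
\begin{align*}
  \rho(\tfrac{\tau}{2})=-\rho(\tfrac{\tau}{2})-\tpi ,
\end{align*}
hence $\rho(\frac\tau2)=-\pii$. The same argument with $u=-\frac{1+\tau}{2}$ gives $\rho(\frac{1+\tau}{2})=-\pii$; we only need that $\rho(-\frac{1+\tau}{2}+\tau)=\rho(\frac{\tau-1}{2})=\rho(\frac{1+\tau}{2})$, which follows from $1$-periodicity.

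\emph{Part (ii).} Fix $m\in\{2,3,4\}$ and set
\begin{align*}
  g(t)=\rho(w_m+t)+\rho(w_m-t).
\end{align*}
Since $2w_m\in\La_\tau$, the point $w_m-t$ differs from $-(w_m+t)$ by an element of $\La_\tau$. Rather than rely on this, the cleanest route is to show that $g$ is an elliptic function with no poles.

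\emph{Periodicity.} From (\ref{eq:rho-period}), $g(t+1)=g(t)$. Under $t\mapsto t+\tau$, the first term drops by $\tpi$ and the second rises by $\tpi$, since $\rho(w_m-t-\tau)=\rho(w_m-t)+\tpi$. So $g$ is doubly periodic.

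\emph{Absence of poles.} The poles of $\rho$ are simple with residue $1$ at $\La_\tau$. The term $\rho(w_m+t)$ has poles at $t\in -w_m+\La_\tau$ with residue $1$. The term $\rho(w_m-t)$ has poles at $t\in w_m+\La_\tau=-w_m+\La_\tau$ with residue $-1$. These cancel, so $g$ is entire and doubly periodic, hence constant by Liouville.

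\emph{Evaluating the constant.} Take $t=0$ to get $g\equiv 2\rho(w_m)$, and part (i) identifies this with $2\varpi_m$.

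\emph{Where care is needed.} The argument is essentially routine. The only delicate point is the sign bookkeeping in the $\tau$-shift of the second term, where the $-\tpi$ from (\ref{eq:rho-period}) enters with the opposite sign because the argument is $-t$. A secondary point: for $m=2$ we have $\varpi_2=0$ while $\rho(\frac12)=0$, so the definition of $\varpi_m$ for $m=2$ is consistent with (i).
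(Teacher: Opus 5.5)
Your proof is correct and follows the paper's argument. For (i) you substitute $u=-\frac12,-\frac{\tau}{2},-\frac{1+\tau}{2}$ into (\ref{eq:rho-period}) and use that $\rho$ is odd; for (ii) you show $\rho(w_m+t)+\rho(w_m-t)$ is elliptic and holomorphic, hence constant. You also fill in the details the paper calls easily shown, namely the cancellation of the $\tau$-shifts and the cancellation of residues via $2w_m\in\La_\tau$.
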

\begin{proof}
  \begin{enumerate}[(i)]
  \item Since $\rho(u)$ is an odd function, 
    we can prove the formulas by
    substituting $u=-\frac{1}{2},-\frac{\tau}{2},-\frac{1+\tau}{2}$ into (\ref{eq:rho-period}). 
  \item As a function of $t$, the left-hand side is an elliptic function. 
    We can easily show that this function is holomorphic, and hence it is constant.  
    \qedhere
  \end{enumerate}
\end{proof}

\subsection{Formulas to compute $\na_{1p}$}
To prove several formulas, 
we use 
\begin{align}
  \label{eq:rel-s-FRV}
  \frs(t-u;\la_1+\la_2)\frs(s-t;\la_2) - \frs(s-u;\la_2)\frs(t-u;\la_1) + \frs(t-s;\la_1)\frs(s-u;\la_1+\la_2) =0 
\end{align}
which is given in \cite[Theorem 5.3]{FRV}. 
The same method is also used in our proof. 

\begin{Lem}\label{lem:for-psi(pmj)}
  \begin{align}
    \nonumber
    &\frs(\mp t_{1p} -t_{2j};\la_2 \mp \la_1 ) \frs(u_2 \pm t_{1p};\la_2 \mp \la_1 ) \frs(u_1 \pm u_2;\la_1) \\
    \nonumber
    &\qquad -\frs(u_1 -t_{1p};\la_1)\frs(u_2 \pm t_{1p};\pm \la_1)\frs(u_2-t_{2j};\la_2 \mp \la_1) \\
    \nonumber
    &=-\frs(t_{2j} \pm t_{1p};\pm \la_1)\frs(u_1 -t_{1p};\la_1) \frs(u_2-t_{2j};\la_2) \\
    \label{eq:for-psi(pmj)}
    &\qquad \mp \frs(\mp t_{1p}-t_{2j};\la_2 \mp \la_1)\frs(u_1-t_{1p};\la_1 \mp \la_2) \frs(u_1 \pm u_2; \pm \la_2) .
  \end{align}  
\end{Lem}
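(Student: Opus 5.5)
We fix the upper sign; the lower sign is handled the same way, or is obtained from the upper one by $u_2\to -u_2$, $\la_2\to -\la_2$ together with (\ref{eq:s(-u)=-s(u)}). Write $t=t_{1p}$ and $T=t_{2j}$. Rather than chaining (\ref{eq:rel-s-FRV}) blindly, we regard both sides of (\ref{eq:for-psi(pmj)}) as meromorphic functions of $u_1$, with $u_2,t,T$ fixed in general position. We then show that their difference $D(u_1)$ vanishes by the standard argument: $D$ is a quasi-periodic function with nontrivial multiplier and no poles.

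\textbf{Step 1: quasi-periodicity.} Every term on both sides contains exactly one factor depending on $u_1$. That factor is $\frs(u_1+u_2;\la_1)$, $\frs(u_1-t;\la_1)$, $\frs(u_1-t;\la_1-\la_2)\frs(u_1+u_2;\la_2)$, and so on. By the quasi-periodicity of $\frs$ each term is invariant under $u_1\mapsto u_1+1$. Under $u_1\mapsto u_1+\tau$ each term is multiplied by $e^{\tpi\la_1}$; for the product term this is $e^{\tpi(\la_1-\la_2)}e^{\tpi\la_2}$. Hence $D(u_1)$ has multipliers $1$ and $e^{\tpi\la_1}$, with $\la_1\notin\La_\tau$ by (\ref{eq:ass-lambda}). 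A holomorphic function with these multipliers is zero: $|D|$ descends to a continuous function on $E$ and attains a maximum, so $D$ is constant by the maximum principle, and a nonzero constant is incompatible with the multiplier $e^{\tpi\la_1}\neq 1$. So it suffices to check that $D$ has no poles.

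\textbf{Step 2: poles.} The only possible poles are simple, at $u_1=t$ and at $u_1=-u_2$ modulo $\La_\tau$. We use that $\frs(u;\la)$ has residue $1$ at $u=0$.

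At $u_1=-u_2$, the residue of the left-hand side is $\frs(-t-T;\la_2-\la_1)\frs(u_2+t;\la_2-\la_1)$. The residue of the right-hand side is $-\frs(-t-T;\la_2-\la_1)\frs(-u_2-t;\la_1-\la_2)$. These agree by (\ref{eq:s(-u)=-s(u)}).

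At $u_1=t$, the residues reduce to the identity
\begin{align*}
\frs(u_2+t;\la_1)\frs(u_2-T;\la_2-\la_1)=\frs(T+t;\la_1)\frs(u_2-T;\la_2)+\frs(-t-T;\la_2-\la_1)\frs(u_2+t;\la_2),
\end{align*}
which is a three-term relation in $u_2$ only.

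\textbf{Step 3: the three-term relation (main obstacle).} This is where the work lies. We first try to match it with (\ref{eq:rel-s-FRV}) by choosing the points $(t,s,u)$ among $\{-t,T,u_2\}$ and the parameters among $\{\la_1,\la_2-\la_1\}$, using (\ref{eq:s(-u)=-s(u)}) to fix signs; the sign bookkeeping is the delicate part. If the matching is awkward, we repeat the argument of Steps 1--2 in the variable $u_2$. Every term has multipliers $1$ and $e^{\tpi\la_2}$ under $u_2\mapsto u_2+1,\,u_2+\tau$, and $\la_2\notin\La_\tau$. The poles are simple at $u_2=-t$ and $u_2=T$.

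At $u_2=-t$, the residues are $\frs(-t-T;\la_2-\la_1)$ on both sides.

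At $u_2=T$, the residues are $\frs(T+t;\la_1)$ on both sides.

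So the difference is again a pole-free quasi-periodic function with nontrivial multiplier, hence identically zero. This proves (\ref{eq:for-psi(pmj)}).
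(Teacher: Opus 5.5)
Your route is the paper's. You treat the difference as a function of $u_1$, check that the residues at $u_1=t_{1p}$ and $u_1=\mp u_2$ cancel, and finish with a Liouville-type argument using the multipliers $1$ and $e^{\tpi\la_1}$. Your residue computations are correct. The three-term relation you isolate at $u_1=t_{1p}$ is exactly (\ref{eq:rel-s-FRV}) after replacing $(t,s,u)$ by $(t_{1p},-t_{2j},-u_2)$ and $(\la_1,\la_2)$ by $(\la_1,\la_2-\la_1)$, which is how the paper handles it. Your fallback of rerunning the argument in $u_2$ also works: the residues $\frs(-t-T;\la_2-\la_1)$ and $\frs(T+t;\la_1)$ do match. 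A minor point: deducing the lower sign from the upper one also requires $t_{2j}\mapsto -t_{2j}$, not only $u_2\mapsto -u_2$ and $\la_2\mapsto-\la_2$.

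The gap is in your vanishing lemma in Step 1, which you reuse in Step 3. Since $\la_1\in\C$, the modulus $|e^{\tpi\la_1}|=e^{-2\pi\,\mathrm{Im}\,\la_1}$ is in general not $1$. So $|D(u_1+\tau)|\neq|D(u_1)|$, $|D|$ does not descend to $E$, and the maximum-principle step fails. Your argument also only invokes $e^{\tpi\la_1}\neq 1$, and under that hypothesis alone the claim is false: $D(u)=e^{\tpi u}$ is a nonzero entire function with multipliers $1$ and $e^{\tpi\tau}\neq 1$.

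You have to use $\la_1\notin\La_\tau$ genuinely. One way is to expand the $1$-periodic entire function as $D(u)=\sum_{n\in\Z}a_n e^{\tpi nu}$. The $\tau$-multiplier then gives $a_n\bigl(e^{\tpi n\tau}-e^{\tpi\la_1}\bigr)=0$, so $a_n\neq 0$ would force $\la_1\in n\tau+\Z\subset\La_\tau$. Another way is to write $\la_1=\alpha+\beta\tau$ with $\alpha,\beta\in\mathbb{R}$ and apply your argument to $e^{-\tpi\beta u}D(u)$. Its multipliers $e^{-\tpi\beta}$ and $e^{\tpi\alpha}$ are unimodular and not both equal to $1$, so $|\cdot|$ does descend to $E$. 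Make the same repair, with $\la_2$, for the $u_2$-argument in Step 3; with it the proof is complete.
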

\begin{proof}
  We give an outline of the proof. 
  Let $f$ denote the function defined by $(\text{LHS})-(\text{RHS})$. 
  We regard $f$ as a function of $u_1$. 
  Using (\ref{eq:rel-s-FRV}), we can show that 
  $\Res_{u_1=t_{1p}}f =\Res_{u_1=u_2}f =0$, and hence $f$ is an entire function. 
  Since $f(u_1 +1)=f(u_1)$ and $f(u_1 +\tau)=e^{\tpi \la_1}f(u_1)$, 
  the same argument as the proof of \cite[Theorem 5.3]{FRV} implies $f=0$. 
\end{proof}

\begin{Lem}\label{lem:for-psi(+-m)}
  For $\ast \in \Psi$, let $g_{\ast}(u_1,u_2)$ be the function defined by 
  $\psi_{\ast}/(du_1 \we du_2)$. 
  The functions 
  \begin{align*}
    G_m&=-\frs(u_1-w_m;\frac{\la_1 +\la_2+2w_m}{2})\frs(w_m+u_2;\frac{\la_1 +\la_2+2w_m}{2})
    \frs(u_1-u_2;\frac{\la_1-\la_2+2w_m}{2}) \\
    &\quad 
      -\frs(u_1-w_m;\frac{\la_1-\la_2+2w_m}{2})\frs(w_m-u_2;\frac{\la_1-\la_2+2w_m}{2})
      \frs(u_1+u_2;\frac{\la_1 +\la_2+2w_m}{2}) \\
    &\quad 
      +2\big( \rho (u_1-w_m)+\varpi_m -\rho(u_1-t_{1p}) \big)
      \frs(u_1+u_2;\frac{\la_1 +\la_2+2w_m}{2}) \frs(u_1-u_2;\frac{\la_1-\la_2+2w_m}{2}) 
  \end{align*}
  are expressed as follows: 
  \begin{align}
    \nonumber
    G_1
    &=-2\frs(2t_{1p};\frac{\la_1-\la_2}{2})g_{p+}
      +2\frs(2t_{1p};\frac{\la_1 +\la_2}{2})g_{p-}
      -\rho(t_{1p})g_{+-,1} -\rho(t_{1p}-\frac{1}{2}) g_{+-,2} \\
    \label{eq:for-psi(+-1)}
    &\quad -\ell \Big( \rho(t_{1p}-\frac{\tau}{2}) -\pii \Big)g_{+-,3}
      -\ell \Big( \rho(t_{1p}-\frac{1+\tau}{2}) -\pii \Big)g_{+-,4} ,\\
    \nonumber
    G_2
    &=-2\frs(2t_{1p};\frac{\la_1-\la_2+1}{2})g_{p+}
      +2\frs(2t_{1p};\frac{\la_1 +\la_2+1}{2})g_{p-} 
      -\rho(t_{1p}) g_{+-,1} -\rho(t_{1p}-\frac{1}{2}) g_{+-,2} \\
    \label{eq:for-psi(+-2)}
    &\qquad
      +\ell \Big( \rho(t_{1p}-\frac{\tau}{2})-\pii \Big)g_{+-,3}
      +\ell \Big( \rho(t_{1p}-\frac{1+\tau}{2})-\pii \Big)g_{+-,4} ,
  \end{align}
  \begin{align}
    \nonumber
    &e^{-\tpi u_1}G_3 \\
    \nonumber
    &=-2 e^{-\tpi t_{1p}} \frs(2t_{1p};\frac{\la_1-\la_2+\tau}{2})g_{p+}
      +2 e^{-\tpi t_{1p}} \frs(2t_{1p};\frac{\la_1 +\la_2+\tau}{2})g_{p-} \\
    \label{eq:for-psi(+-3)}
    &\quad -\rho(t_{1p}) g_{+-,1}  +\rho(t_{1p}-\frac{1}{2})g_{+-,2}
      -\ell \Big( \rho(t_{1p}-\frac{\tau}{2}) -\pii \Big)g_{+-,3} 
      +\ell \Big( \rho(t_{1p}-\frac{1+\tau}{2}) -\pii \Big) g_{+-,4},\\
    \nonumber
    &e^{-\tpi u_1}G_4 \\
    \nonumber
    &=-2 e^{-\tpi t_{1p}} \frs(2t_{1p};\frac{\la_1-\la_2+1+\tau}{2})g_{p+}
      +2 e^{-\tpi t_{1p}} \frs(2t_{1p};\frac{\la_1 +\la_2+1+\tau}{2})g_{p-} \\
    \label{eq:for-psi(+-4)}
    &\quad -\rho(t_{1p}) g_{+-,1}  +\rho(t_{1p}-\frac{1}{2}) g_{+-,2}
      +\ell \Big( \rho(t_{1p}-\frac{\tau}{2}) -\pii \Big) g_{+-,3} 
      -\ell \Big( \rho(t_{1p}-\frac{1+\tau}{2}) -\pii \Big) g_{+-,4}.
  \end{align}
\end{Lem}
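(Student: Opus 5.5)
Each of the four identities is an identity of meromorphic functions in $(u_1,u_2)$, and I would prove it with the same Liouville-type argument used in Lemma \ref{lem:for-psi(pmj)}. Fix $m$ and let $f_m$ be the left-hand side ($G_m$, or $e^{-\tpi u_1}G_m$ for $m=3,4$) minus the right-hand side. Regard $f_m$ as a function of $u_1$ with $u_2$ generic and fixed. The plan has three steps.

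\textbf{Step 1: quasi-periodicity in $u_1$.} From $\frs(u+1;\la)=\frs(u;\la)$, $\frs(u+\tau;\la)=e^{\tpi\la}\frs(u;\la)$ and \eqref{eq:rho-period}, each product $\frs(u_1+u_2;\cdot)\frs(u_1-u_2;\cdot)$ picks up the factor $e^{\tpi(\la_1+2w_m)}$ under $u_1\mapsto u_1+\tau$. The prefactor $e^{-\tpi u_1}$ (for $m=3,4$) and the sign coming from $u_1\mapsto u_1+1$ (for $m=2,4$) are exactly what adjust the terms so that every summand of $f_m$ has a common multiplier, namely $1$ under $u_1\mapsto u_1+1$ and $e^{\tpi\la_1}$ under $u_1\mapsto u_1+\tau$. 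This is the same multiplier as that of $g_{p\pm}$ and of $g_{+-,k}$. For $g_{+-,k}$ it follows from the construction via $M^{-1}$, since all $\psi'_{+-,k}$ have multiplier $e^{\tpi\la_1}$ in $u_1$. In the $\rho$-terms, the shifts $-\tpi$ cancel between $\rho(u_1-w_m)$ and $\rho(u_1-t_{1p})$.

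\textbf{Step 2: all residues in $u_1$ vanish.} The possible poles of $f_m$ in $u_1$ lie at $u_1=w_m$, $u_1=t_{1p}$, $u_1=\pm u_2$ and, through the $g_{+-,k}$, at $u_1=\pm u_2$ only. I would treat them one at a time.
\begin{enumerate}[(a)]
\item At $u_1=t_{1p}$, only the $\rho(u_1-t_{1p})$ term of $G_m$ and the $g_{p\pm}$ terms contribute. The residue on the left is $-2\frs(t_{1p}+u_2;\cdot)\frs(t_{1p}-u_2;\cdot)$. On the right, $g_{p\pm}$ has residue $\pm\frs(t_{1p}\pm u_2;\pm\la_2)$. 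The resulting identity in $u_2$ should follow from the three-term relation \eqref{eq:rel-s-FRV} with $u=-t_{1p}$, $t=u_2$, $s=t_{1p}$, together with \eqref{eq:s(-u)=-s(u)}. This is where the coefficients $2\frs(2t_{1p};\cdot)$ appear.
\item At $u_1=w_m$, the pole of $\rho(u_1-w_m)$ cancels against the poles of the two $\frs(u_1-w_m;\cdot)$ factors. Both residues equal $\frs(w_m+u_2;\cdot)\frs(w_m-u_2;\cdot)$ up to sign, using \eqref{eq:s(-u)=-s(u)} and $2w_m\in\La_\tau$ to rewrite $\frs(w_m-u_2)$ as $\frs(-w_m-u_2)$ up to the multiplier.
\item At $u_1=\pm u_2$, the residue of $G_m$ is a function of $u_2$ alone. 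It is matched against the residues of $g_{p\mp}$ and of $\sum_k(\cdot)g_{+-,k}$. Its value at $u_2=w_k$ is governed by the iterated-residue normalisation $\de_{ik}$ of $\psi_{+-,k}$, which fixes the coefficients $\rho(t_{1p}-w_k)+\varpi_k$. Lemma \ref{lem:rho(a+b)+rho(a-b)} is used to simplify $\rho(u_2-w_m)+\rho(-u_2-w_m)$ to $2\varpi_m$.
\end{enumerate}

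\textbf{Step 3: conclude $f_m=0$.} Once the residues vanish, $f_m$ is entire in $u_1$ and has multipliers $1$ and $e^{\tpi\la_1}$ with $\la_1\notin\La_\tau$. Exactly as in \cite[Theorem 5.3]{FRV}, such a function is identically zero: $f_m$ is bounded on a fundamental domain up to an exponential factor, so it must be a constant times $e^{2\pi i k u_1}$, and the multiplier condition rules this out since $\la_1\notin\La_\tau$.

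\textbf{Main obstacle.} I expect the hardest part to be Step 2(c) at $u_1=\pm u_2$. There the residue is a genuine function of $u_2$, not a constant, so proving it vanishes needs a second Liouville argument in $u_2$. The poles in $u_2$ sit at $w_k$ and $\pm t_{1p}$, and they are matched by the $M^{-1}$-combination and the $\frs(2t_{1p};\cdot)$ coefficients. The multiplier in $u_2$ is $e^{\tpi(\pm\la_2)}$-type, which is nontrivial by \eqref{eq:ass-lambda}. I would first try this nested residue argument. If it gets unwieldy, the fallback is to verify the identity after multiplying by $M$, that is, for the $\psi'$ combinations, where each term is an explicit product of two $\frs$'s.
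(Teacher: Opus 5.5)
Your proposal is correct and follows the paper's proof. The paper also takes $f=(\text{LHS})-(\text{RHS})$ as a function of $u_1$ and checks that its residues at $u_1=t_{1p}$, $u_1=w_m$ and $u_1=\pm u_2$ vanish. The last check is done by viewing the residue as a function of $u_2$: the iterated residues of $g_{+-,k}$ there differ from $\de_{ik}$ by signs and factors $e^{-\tpi\la_1}$, because of the choice of representatives of the points $P_k$, and an apparent double pole cancels. The paper then concludes as in \cite[Theorem 5.3]{FRV}. The only structural difference is that the paper proves $m=1$ and transfers it to $m=2,3,4$ by the substitution $(u_1,u_2,\la_1,t_{1p})\mapsto(u_1-w_m,u_2-w_m,\la_1+2w_m,t_{1p}-w_m)$, whereas you treat each $m$ separately.

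One small correction: under $u_2\mapsto u_2+\tau$ the residue at $u_1=\pm u_2$ has multiplier $e^{\tpi(\la_2\pm\la_1)}$, not $e^{\tpi(\pm\la_2)}$. This is still nontrivial by \eqref{eq:ass-lambda}, so your second Liouville step goes through.
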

\begin{proof}
  We show only (\ref{eq:for-psi(+-1)}). 
  Once we have proved (\ref{eq:for-psi(+-1)}), the other three formulas 
  can be obtained by replacing 
  $(u_1,u_2,\la_1 ,t_{1p})$ with $(u_1-w_m,u_2-w_m,\la_1+2w_m ,t_{1p}-w_m)$. 

  We can prove (\ref{eq:for-psi(+-1)}) similarly to the proof of Lemma \ref{lem:for-psi(pmj)}: 
  we set $f(u_1)=(\text{LHS})-(\text{RHS})$ and show that 
  $\Res_{u_1=t_{1p}}f=\Res_{u_1 =\pm u_2}f=0$. 
  Note that $f$, regarded as a function of $u_1$, does not have a pole along $(u_1=0)$. 
  Indeed, we can verify this by considering the residue. 

  We give some remarks for computing $\Res_{u_1 =\pm u_2}f$. 
  By definition, the residue $\Res_{u_1-u_2=0}(\Res_{u_1+u_2=0} \psi_{+-,m})$ equals $1$ when 
  we take $(u_1,u_2)=(w_m,w_m)$ as a representative of $P_m \in E^2$. 
  However, if we consider $\Res_{u_2=w_m}(\Res_{u_1=-u_2} g_{+-,m}(u_1,u_2))$, 
  the corresponding representative is $(u_1,u_2)=(-w_m,w_m)$. 
  Thus, the residue $\Res_{u_2=w_m}(\Res_{u_1=-u_2} g_{+-,m}(u_1,u_2))$ may not equal $1$. 
  We list the residues:
  \begin{align*}
    \begin{array}{r|c|c|c|c}
      m&1&2&3&4 \\ \hline
      \Res_{u_2=w_m}(\Res_{u_1=-u_2} g_{+-,m}) &1&1&e^{\tpi (-\la_1)}&e^{\tpi (-\la_1)} \\
      \Res_{u_2=w_m}(\Res_{u_1=u_2} g_{+-,m}) &-1&-1&-1&-1
    \end{array}
  \end{align*}
  When we regard $\Res_{u_1 =\pm u_2}f$ as a function of $u_2$, 
  it seems that $\Res_{u_1 =\pm u_2}f$ has a pole of order $2$ at $u_2=0$. 
  However, we can see that its singular part vanishes. 
\end{proof}

The following two lemmas can be proved similarly. 
\begin{Lem}
  \begin{align}
    \nonumber 
    &\Big(\frs(u_2-t_{1p};\la_1)\frs(u_1 -u_2;\la_1)
      +\frs(-u_2-t_{1p};\la_1)\frs(u_1 +u_2;\la_1) \\
    &\nonumber \qquad 
      -\big(\rho(u_2-t_{1p}) +\rho(-u_2-t_{1p}) \big)\frs(u_1 -t_{1p};\la_1) \Big)  
      \cdot \frs(u_2 -t_{2j};\la_2) \\
    &\nonumber 
      =\big( \rho(t_{1p}-t_{2j}) +\rho(t_{1p}+t_{2j}) \big) \frs(u_1 -t_{1p};\la_1) \frs(u_2-t_{2j};\la_2) \\
    &\nonumber \qquad 
      +\frs(- t_{2j}-t_{1p};\la_1) \frs(u_1 + u_2;\la_1) \frs(u_2-t_{2j};\la_2 - \la_1)\\
    &\nonumber \qquad  
      +\frs( t_{2j}-t_{1p};\la_1) \frs(u_1 - u_2;\la_1) \frs(u_2-t_{2j};\la_2 + \la_1) \\
    &\nonumber \qquad 
      +\frs(-t_{1p} -t_{2j};\la_2) \frs(u_1-t_{1p};\la_1 - \la_2) \frs(u_1 + u_2; \la_2)\\
    &\label{eq:for-psi(pj)} \qquad  
      +\frs(t_{1p} -t_{2j};\la_2) \frs(u_1-t_{1p};\la_1 + \la_2) \frs(u_1 - u_2; - \la_2) .
  \end{align}
\end{Lem}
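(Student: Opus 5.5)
The plan is the residue-and-Liouville argument used for Lemma \ref{lem:for-psi(pmj)}. Let $f$ be (LHS)$-$(RHS) of (\ref{eq:for-psi(pj)}), viewed as a meromorphic function of $u_1$ with $u_2$ and $t$ generic. The factor $\frs(u_2-t_{2j};\la_2)$ on the left and the corresponding $u_2$-factors on the right play no role in the $u_1$-analysis, so they are kept as coefficients.

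\textbf{Quasi-periodicity.} Every term of $f$ satisfies $f(u_1+1)=f(u_1)$ and $f(u_1+\tau)=e^{\tpi\la_1}f(u_1)$. In each term, the $u_1$-dependent factor is one of $\frs(u_1\pm u_2;\la_1)$, $\frs(u_1-t_{1p};\la_1)$, or a product $\frs(u_1-t_{1p};\la_1\mp\la_2)\frs(u_1\pm u_2;\pm\la_2)$. In each case the multipliers combine to $e^{\tpi\la_1}$.

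\textbf{Poles.} The only possible poles of $f$ in $u_1$ modulo $\La_\tau$ are at $u_1=t_{1p}$, $u_1=u_2$ and $u_1=-u_2$, all simple. Using $\Res_{u=0}\frs(u;\la)=1$, I compute the residues as follows.
\begin{itemize}
\item At $u_1=t_{1p}$, the left side gives $-(\rho(u_2-t_{1p})+\rho(-u_2-t_{1p}))\frs(u_2-t_{2j};\la_2)$. The right side gives $(\rho(t_{1p}-t_{2j})+\rho(t_{1p}+t_{2j}))\frs(u_2-t_{2j};\la_2)$ together with the two terms
\begin{align*}
\frs(-t_{1p}-t_{2j};\la_2)\frs(t_{1p}+u_2;\la_2)+\frs(t_{1p}-t_{2j};\la_2)\frs(t_{1p}-u_2;-\la_2).
\end{align*}
Their equality is an identity in $u_2$, and I would prove it by applying (\ref{eq:mano(38)}) twice, with $w=u_2$ and the points $\{-t_{1p},t_{2j}\}$, resp.\ $\{t_{1p},t_{2j}\}$, after using (\ref{eq:s(-u)=-s(u)}) to rewrite $\frs(t_{1p}-u_2;-\la_2)=-\frs(u_2-t_{1p};\la_2)$.
\item At $u_1=u_2$, the residue is
\begin{align*}
\frs(u_2-t_{1p};\la_1)\frs(u_2-t_{2j};\la_2)-\frs(t_{2j}-t_{1p};\la_1)\frs(u_2-t_{2j};\la_2+\la_1)-\frs(t_{1p}-t_{2j};\la_2)\frs(u_2-t_{1p};\la_1+\la_2).
\end{align*}
This should be exactly an instance of (\ref{eq:rel-s-FRV}) after the substitution $(u,t,s)=(t_{1p},u_2,t_{2j})$, possibly with (\ref{eq:s(-u)=-s(u)}) to fix signs.
\item At $u_1=-u_2$, the residue is handled the same way. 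It is an instance of (\ref{eq:rel-s-FRV}) with $(\la_1,\la_2)$ replaced by $(\la_1,-\la_2)$ and $u_2$ replaced by $-u_2$.
\end{itemize}

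\textbf{Conclusion.} Once these residues vanish, $f$ is entire in $u_1$ and has multipliers $1$ and $e^{\tpi\la_1}$. Since $\la_1\notin\La_\tau$ by (\ref{eq:ass-lambda}), the argument of \cite[Theorem 5.3]{FRV} forces $f\equiv0$. Briefly, $f/\frs(u_1-a;\la_1)$ is elliptic with at most one simple pole and a zero at $a$, so it vanishes.

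\textbf{Expected obstacle.} I expect the main difficulty to be the $u_1=t_{1p}$ residue, because it mixes $\rho$ and $\frs$ rather than being a pure three-term $\frs$ relation. If (\ref{eq:mano(38)}) does not match directly, I would instead prove that residue identity as a separate lemma by the same pole-cancellation method in the variable $u_2$. The relevant poles are at $u_2=\pm t_{1p}$ and $u_2=t_{2j}$, and the multiplier is $e^{\tpi\la_2}$.
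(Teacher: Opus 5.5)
Your proposal is correct and is essentially the paper's argument: the paper proves this lemma ``similarly'' to Lemma \ref{lem:for-psi(pmj)}, by showing that the residues of LHS$-$RHS in $u_1$ at $t_{1p}$ and $\pm u_2$ vanish and then using the multipliers $1$, $e^{\tpi\la_1}$ with $\la_1\notin\La_\tau$; your treatment of the $u_1=t_{1p}$ residue (subtract the two instances of (\ref{eq:mano(38)}) and use oddness of $\rho$) does go through. One bookkeeping correction: the $u_1=u_2$ residue is (\ref{eq:rel-s-FRV}) with $(u,t,s)=(t_{1p},u_2,t_{2j})$ only after you also relabel the parameters $(\la_1,\la_2)\to(\la_1+\la_2,-\la_2)$, and the $u_1=-u_2$ residue is the instance $(u,t,s)=(t_{1p},-u_2,-t_{2j})$ with parameters $(\la_1-\la_2,\la_2)$, so you need $t_{2j}\to-t_{2j}$ as well as $u_2\to-u_2$.
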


\begin{Lem}
  Let $g_{\ast}$ ($\ast \in \Psi$) be the function defined in Lemma \ref{lem:for-psi(+-m)}. 
  Then, we have 
  \begin{align}
    &\nonumber 
      \Big( \big( 
      -\sum_{j} c_{2j}\rho (u_2-t_{2j})      
      \mp 2c \rho(\pm u_2-t_{1p})
      \big)\frs(u_1-t_{1p};\la_1 \mp \la_2)  \\
    &\nonumber \qquad 
      \pm 2c \frs(u_1\mp u_2;\la_1 \mp \la_2)\frs(\pm u_2-t_{1p};\la_1 \mp \la_2)
      \Big) \frs(u_1 \pm u_2; \pm \la_2)  \\
    &\nonumber 
      =-\sum_{j} \big( c_{2j}\frs (t_{1p} \pm t_{2j}; \pm \la_2) g_{pj} 
      +c_{2j} \frs(\mp t_{2j}-t_{1p};\la_1 \mp \la_2) g_{\pm j} \big) \\
    &\nonumber \qquad
      + \Big( \sum_{j} c_{2j}\rho (t_{1p} \pm t_{2j})      
      + 2c \rho(2t_{1p}) \Big) g_{p\pm}
      -2c\frs(2t_{1p}; \pm \la_2) g_{p\mp} \\
    &\nonumber \qquad
      \mp c \frs(-t_{1p};\la_1 \mp \la_2) g_{+-,1}
      \mp c \frs(\pm \frac{1}{2}-t_{1p};\la_1 \mp \la_2) g_{+-,2} \\
    &\label{eq:for-psi(ppm)} \qquad
      \mp a_{\pm} c \frs(\pm\frac{\tau}{2}-t_{1p};\la_1 \mp \la_2) g_{+-,3}
      \mp a_{\pm} c \frs(\pm \frac{1+\tau}{2}-t_{1p};\la_1 \mp \la_2) g_{+-,4} .
  \end{align}
\end{Lem}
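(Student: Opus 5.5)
The identity (\ref{eq:for-psi(ppm)}) is an identity of meromorphic functions in $(u_1,u_2)$. I will prove it by the same residue-and-quasi-periodicity argument as in Lemma \ref{lem:for-psi(pmj)} and Lemma \ref{lem:for-psi(+-m)}. Let $f(u_1,u_2)$ be $(\text{LHS})-(\text{RHS})$ and regard $f$ as a function of $u_1$ with $u_2$ fixed generic. The first step is to check quasi-periodicity. Every summand on both sides is a product of $\frs$-factors, and a factor $\frs(\cdot;\mu)$ in which $u_1$ enters with coefficient $1$ contributes a multiplier $e^{\tpi \mu}$ under $u_1\mapsto u_1+\tau$. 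The $u_1$-dependent factors are:
\begin{itemize}
\item on the left, $\frs(u_1-t_{1p};\la_1\mp\la_2)\frs(u_1\pm u_2;\pm\la_2)$ and $\frs(u_1\mp u_2;\la_1\mp\la_2)\frs(u_1\pm u_2;\pm\la_2)$;
\item on the right, $g_{pj}$, $g_{\pm j}$, $g_{p\pm}$, $g_{p\mp}$ and the $g_{+-,m}$.
\end{itemize}
Using the multipliers of $\frs$, together with the factor $e^{-\tpi u_1}$ in $\psi'_{+-,3},\psi'_{+-,4}$, I will check that all these terms have period $1$ in $u_1$ and multiplier $e^{\tpi \la_1}$ under $u_1\mapsto u_1+\tau$. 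Hence $f(u_1+1)=f(u_1)$ and $f(u_1+\tau)=e^{\tpi\la_1}f(u_1)$. Since $\la_1\notin\La_\tau$ by (\ref{eq:ass-lambda}), an entire function with these multipliers vanishes, exactly as in \cite[Theorem 5.3]{FRV}. So it suffices to show that $f$ has no poles in $u_1$.

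The possible poles in $u_1$ are at $u_1=t_{1p}$, $u_1=u_2$ and $u_1=-u_2$. I will compute the residues one at a time.
\begin{itemize}
\item At $u_1=t_{1p}$: the left-hand side gives $\big(-\sum_j c_{2j}\rho(u_2-t_{2j})\mp 2c\rho(\pm u_2-t_{1p})\big)\frs(t_{1p}\pm u_2;\pm\la_2)$. On the right only $g_{p\pm}$, $g_{p\mp}$ and $g_{pj}$ contribute, and their residues are multiples of $\frs(t_{1p}\pm u_2;\pm\la_2)$, $\frs(t_{1p}\mp u_2;\mp\la_2)$ and $\frs(u_2-t_{2j};\la_2)$. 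Matching these is a one-variable identity in $u_2$. I will prove it with (\ref{eq:mano(39)}) applied to the points $\{t_{2j}\}\cup\{\mp t_{1p}\}$, with weights $c_{2j}$ and $2c$; these weights sum to zero by (\ref{eq:ass-sum-c}).
\item At $u_1=-u_2$ and $u_1=u_2$ (taking signs appropriately): the residue of the left-hand side is a product of two $\frs$'s in $u_2$. On the right the contributions come from $g_{\pm j}$, $g_{p\pm}$ or $g_{p\mp}$, and the four $g_{+-,m}$. The $g_{+-,m}$ residues are read off from the table in the proof of Lemma \ref{lem:for-psi(+-m)}; this is where the factors $a_\pm$ appear. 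Each resulting identity in $u_2$ I will again prove by the pole-cancellation method: compare residues in $u_2$ at $t_{2j}$, at $\pm t_{1p}$, and at the half-periods $w_m$, using (\ref{eq:rel-s-FRV}) at each pole, and then apply quasi-periodicity in $u_2$.
\end{itemize}

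I expect the main obstacle to be the residue along $u_1=\mp u_2$ near the half-periods. There the term $2c\,\frs(u_1\mp u_2;\cdot)\frs(\pm u_2-t_{1p};\cdot)$ collides with the four $g_{+-,m}$, and apparent double poles at $u_2=w_m$ arise. I would first verify that the singular parts cancel, using Lemma \ref{lem:rho(a+b)+rho(a-b)}. I would then fix the coefficients $\mp c\,\frs(w_m-t_{1p};\la_1\mp\la_2)$ and $a_\pm$ by matching the simple-pole residues at each $w_m$. In doing so I will keep track of the representative issue noted in the proof of Lemma \ref{lem:for-psi(+-m)}: residues taken at $(-w_m,w_m)$ pick up the multiplier $e^{-\tpi\la_1}$ for $m=3,4$.
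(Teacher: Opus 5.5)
Your proposal is correct and follows the argument the paper intends. The paper only says this lemma is proved ``similarly'' to Lemmas~\ref{lem:for-psi(pmj)} and~\ref{lem:for-psi(+-m)}: kill the residues of $f$ in $u_1$ at $t_{1p}$ and $\pm u_2$, then use $f(u_1+1)=f(u_1)$ and $f(u_1+\tau)=e^{\tpi\la_1}f(u_1)$ with $\la_1\notin\La_{\tau}$.

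Two small corrections. First, when you apply (\ref{eq:mano(39)}) at $u_1=t_{1p}$, the weight $2c$ sits at $\pm t_{1p}$, not $\mp t_{1p}$; the base point is $\mp t_{1p}$, with weight $0$. Second, no double poles at the $w_m$ occur here (unlike in Lemma~\ref{lem:for-psi(+-m)}, whose $G_m$ contains $\frs(u_1-w_m;\cdot)$ and $\rho(u_1-w_m)$). So all $u_2$-residues are simple, Lemma~\ref{lem:rho(a+b)+rho(a-b)} is not needed, and the coefficients involving $a_\pm$ come out directly from the simple-pole matching you describe.
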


\begin{Ack}
  The author is grateful to Professor Saiei-Jaeyeong Matsubara-Heo for helpful advice. 
  This work was supported by JSPS KAKENHI Grant Number JP24K06680.
\end{Ack}


\begin{thebibliography}{99}
\bibitem{AK}
  K. Aomoto and M. Kita, 
  ``Theory of Hypergeometric Functions'', 
  translated by K. Iohara, 
  Springer Monographs in Mathematics,
  Springer-Verlag, Tokyo, 2011. 

\bibitem{Cho}
  K. Cho,
  A generalization of Kita and Noumi's vanishing theorems of cohomology groups of local system, 
  \textit{Nagoya Math. J.} \textbf{147} (1997), 63--69.


\bibitem{Deligne}
  P. Deligne, 
  ``\'Equations diff\'erentielles \`a points singuliers r\'eguliers'',
  Lecture Notes in Math., Vol. 163, 
  Springer-Verlag, Berlin-New York, 1970.

\bibitem{FRV}
  G. Felder, R. Rim\'{a}nyi and A. Varchenko, 
  Poincar\'{e}-Birkhoff-Witt expansions of the canonical elliptic differential form, 
  Quantum groups, 191--208.
  Contemp. Math., 433, 
  Israel Math. Conf. Proc.
  American Mathematical Society, Providence, RI, 2007. 


\bibitem{G-RWintegral-intersection}
  Y. Goto, 
  Intersection numbers of twisted homology and cohomology groups associated to the Riemann-Wirtinger integral, 
  \textit{Internat. J. Math.} \textbf{34} (2023), no. 3, 32 pp.


\bibitem{LV}
  A. Levin, A. Varchenko, 
  Cohomology of the complement to an elliptic arrangement, 
  Configuration spaces, CRM Series 14, Ed. Norm., Pisa (2012), pp. 373--388

\bibitem{Mano}
  T. Mano, 
  The Riemann-Wirtinger Integral and Monodromy-Preserving Deformation on Elliptic Curves, 
  \textit{Int. Math. Res. Not. IMRN}, \textbf{2008}, Art. ID rnn110, 19 pp. 

\bibitem{M-k-form}
  K. Matsumoto, 
  Intersection numbers for logarithmic $k$-forms, 
  \emph{Osaka J. Math.}, \textbf{35} (1998), 873--893. 

\bibitem{Mano-Watanabe}
  T. Mano and H. Watanabe, 
  Twisted cohomology and homology groups associated to the Riemann-Wirtinger integral,
  \textit{Proc. Amer. Math. Soc.}, \textbf{140} (2012), no. 11, 3867--3881.  

\bibitem{Matsubara-localization}
  S.-J. Matsubara-Heo, 
  Localization formulas of cohomology intersection numbers, 
  \textit{J. Math. Soc. Japan}, \textbf{75} (2023), no. 3, 909--940.




\end{thebibliography}
\end{document}